\newtheorem{definicao}{Definition}[section]
\newtheorem{teorema}{Theorem}[section]
\newtheorem{corolario}[teorema]{Corollary}
\newtheorem{lema}[teorema]{Lemma}
\newtheorem{rem}[teorema]{Remark}
\newcounter{exemplo}[section]
\newcommand{\exemplo}{\stepcounter{exemplo}\noindent\textbf{Example \arabic{section}.\arabic{exemplo}. }}
\newcommand{\er}{\mathbb{R}}
\newcommand{\en}{\mathbb{N}}
\newcommand{\ze}{\mathbb{Z}}
\newcommand{\Ne}{\mathrm{e}}
\newcommand{\ov}{\overline}
\newcommand{\dst}{\displaystyle}
\begin{document}
	
	\begin{center}
		{\bf {\Large Global attractivity criteria for a discrete-time Hopfield neural network model with unbounded delays via singular $M-$matrices}}
	\end{center}

	\begin{center}
		Jos\'{e} J. Oliveira$^*$, Ana Sofia Teixeira$^\ddag$
	\end{center}

	\begin{center}	
		($*$) Centro de Matem\'{a}tica (CMAT), Departamento de Matem\'{a}tica,\\
		Universidade do Minho, Campus de Gualtar, 4710-057 Braga, Portugal\\
		e-mail: jjoliveira@math.uminho.pt\\

		($\ddag$) Departamento de Matem\'{a}tica,
		Universidade do Minho, Campus de Gualtar, 4710-057 Braga, Portugal\\
		e-mail: pg49167@alunos.uminho.pt\\
		
	\end{center}

	\vskip .5cm

	\begin{abstract}
		
		In this work, we establish two global attractivity criteria for a multidimensional discrete-time non-autonomous Hopfield neural network model with infinite delays and delays in the leakage terms. The first criterion, which applies when the activation functions are bounded, is based on $M-$matrices that are not necessarily invertible. The second criterion, relevant for unbounded activation functions, requires that a related singular $M-$matrix be irreducible. We contrast our findings with existing results in the literature and present numerical simulations to illustrate the efficacy of the proposed criteria.
	\end{abstract}
	
	\noindent
	{\it \textbf{Keywords}}: Hopfield neural network model, global attractivity, difference equation, infinite time delay, $M-$matrix, irreducible matrix
	
	\noindent
	{\it \textbf{Mathematics Subject Classification System 2020}}: 39A10, 39A30, 39A60, 92B20.
	
	\section{Introduction}
	Neural network models have emerged as indispensable tools across diverse scientific and engineering domains, offering innovative solutions to complex challenges. Their applications span optimization \cite{Cochocki+Unbehauen}, signal processing \cite{Cochocki+Unbehauen}, image processing \cite{Aizenberg+Aizenberg+Hiltner+Moraga+Bexten}, and pattern classification \cite{zhang+lou}, where they effectively learn intricate patterns and relationships within data, resulting in highly accurate solutions. Essential attributes such as associative memory, parallel computation, pattern categorization, and fault tolerance \cite{hopfield1} underscore their wide-ranging utility.
	
	In his pioneering work, the recent Nobel Prize winner in Physics, John Hopfield  proposed a system of ordinary differential equations to describe an artificial neural network \cite{hopfield}. To better represent the inherent delays in neural signal transmission, such as finite signal propagation speeds and neuron response times, delays were incorporated into Hopfield's model.
	Marcus and Westervelt \cite{marcus-westervelt1989instability} demonstrated how such delays can destabilize the system.
	Since then, numerous studies on the stability of neural network models with delays have been published (see, for example, \cite{Aktas-Faydasicok-Arik,Berezansky+Braverman+Idels,dong-zhang-wang-2021,faria+jj,jj-jdea} and references therein).
	
	Although the stability analysis of continuous-time neural networks has received substantial attention, the discrete-time counterparts are equally important due to their relevance in computational implementations. As noted in \cite{mohamad+gopalsamy}, discrete-time systems may not fully replicate the dynamics of their continuous-time analogs, thus it is worthwhile to study the stability of discrete-time neural network models with delays.
	This paper focuses on providing sufficient conditions for the global attractivity of a general discrete-time non-autonomous Hopfield neural network model incorporating infinite distributed delays and time-varying delays in the leakage terms. Unlike many existing stability criteria based on $M$-matrices that require these matrices to be non-singular, our results allow for singular $M$-matrices, thereby broadening their applicability. As far as we are aware, the exceptions are the work of Hong and Ma \cite{hong+ma} for an autonomous discrete-time Hopfield type model with finite delays and the work of Ma, Saito, and Takeuchi \cite{ma+saito+takeuchi} for an autonomous continuous-time Hopfield type model also with finite delays.
	
	In \cite{ncube}, the author established sufficient conditions, involving non-singular $M-$matrices, for the existence and global asymptotic stability of an equilibrium in an autonomous continuous-time Hopfield neural network model with infinite distributed delays. Recently, in \cite{elmwafy+oliveira+silva}, the  main result from \cite{ncube} was generalized to a continuous-time Cohen-Grossberg neural network model, also involving non-singular $M-$matrices. For continuous-time Bidirectional Associative Memory(BAM) neural network models with delays in the leakage terms,  \cite{Berezansky+Braverman+Idels} established sufficient conditions for global asymptotic stability, again involving non-singular $M-$matrices. Subsequently, in \cite{oliveira2022}, the main result from \cite{Berezansky+Braverman+Idels} was improved, still  assuming that a certain matrix is a non-singular $M-$matrix.
	
	The stability of discrete-time neural network models has been less studied compared to continuous-time models, and stability results for discrete-time neural network models with infinite delays remain relatively scarce \cite{Chen+song+zhao+liu,jj-jdea}. The use of models with infinite delays allows the modeling of phenomena where the present evolution depends on the entire past history. It is worth noting that difference equations with finite delays can be transformed into delay-free difference equations by increasing the dimension of the realization space \cite{elaydi}, which could potentially make their study easier.
	
	In \cite{bento+jj+silva,dong-zhang-wang-2021,jj-jdea,li+zhang+liu+yang}, sufficient conditions for the global exponential stability of various discrete-time Hopfield neural network models involving non-singular $M-$matrices were established. Specifically, \cite{bento+jj+silva} provided sufficient conditions for the existence and global exponential stability of a periodic solution to a general difference equation with finite delays. This stability criterion was applied to both low-order and high-order periodic Hopfield neural network models, as well as to a BAM neural network model, with a finite delay in the leakage terms, yielding stability criteria involving non-singular $M-$matrices. In \cite{dong-zhang-wang-2021}, respectively in \cite{li+zhang+liu+yang}, a criterion for the exponential stability of a discrete-time high-order, respectively low-order, Hopfield-type neural network model with finite delays and impulses was established, again involving non-singular $M-$matrices. In \cite{jj-jdea} a global exponential stability criterion was established for a general difference equation with infinite delays and applied to discrete-time Hopfield models with both infinite delays and finite delays in the leakage terms.
	
	There are several studies on the stability of neural network models with delays on time-scales, but those that establish stability criteria involving $M-$matrices do so under the assumption that they are non-singular (see, for example, \cite{gao+wang+lin}).
	
	The main contributions of this paper are:
	\begin{description}
		\item[i.] The new technique developed to establish the global stability criteria of the model employing the properties of M-matrices and irreducible singular M-matrices, without requiring their invertibility or the construction of Lyapunov functionals,  as is often observed in the literature;
		\item[ii.] The global attractivity criterion, involving $M-$matrices non-necessarily invertible, established for the discrete-time non-autonomous Hopfield neural network model with infinite distributed and time-varying delays, system \eqref{model-principal}, assuming bounded activation functions that are not required to be differentiable or monotonic, item {\it i.} of Theorem \ref{teo-principal}. It is worth noting that in \cite{hong+ma}, the global attractivity criterion was obtained for the autonomous model  with finite delays, system \eqref{model-hong+ma}, assuming bounded, monotonic, and differentiable activation functions;
		\item[iii.] The stability criterion, involving singular irreducible $M-$matrices, for the same model \eqref{model-principal} without the assumption that the activation functions are bounded, item {\it ii.} of Theorem \ref{teo-principal}. We stress the importance of studying neural network models with unbounded activation functions \cite{sho+murata}, such as Gaussian Error Linear Unit (GELU) functions.
		To the best of our knowledge, existing stability criteria for neural network models with unbounded activation functions, when involving $M$-matrices, typically assume that these matrices are non-singular (see \cite{dong-zhang-wang-2021,jj-jdea} and references therein).
		
	\end{description}
	
	After this introduction, Section \ref{preliminaries} outlines the key notations, provides fundamental results from the theory of $M-$matrices, and introduces the discrete-time non-autonomous Hopfield neural network model considered in this work, which includes infinite distributed delays and time-varying delays in the leakage terms. Section \ref{global-attractiveness} constitutes the main contribution of the paper, where we establish the global attractiveness results. A comparison of our results with those available in the literature is also provided. In Section \ref{numerical-examples}, we include two numerical examples to illustrate the obtained results and highlight their originality in comparison with previous findings. The paper concludes with a brief discussion and summary of the main contributions.

	\section{Preliminaries and Model Description}\label{preliminaries}
	
	For $X\subseteq\er$, we denote by $X_\ze$ the set of integers numbers $X_\ze=X\cap \ze$. In this paper, for $n\in\en$, we always consider the Banach space $\er^n$ with the norm $$|\ov{x}|=|(x_1,\ldots,x_n)|=\max\big\{|x_i|: i\in[1,n]_\ze\big\}.$$
	
	Given $\ov{x}=(x_1,\ldots,x_n)\in\er^n$, we write $\ov{x}\geq0$ if $x_i\geq0$ for all $i\in[1,n]_\ze$, and  $\ov{x}>0$ if  $x_i>0$ for all $i\in[1,n]_\ze$. We write $\overline{x}\overline{y}=(x_1y_1,\ldots,x_ny_n)\in\er^n$, for any  $\overline{x}=(x_1,\dots,x_n), \overline{y}=(y_1,\ldots,y_n)\in\er^n$. For a real number $x\in\er$, we denote its integer part by $\lfloor x \rfloor$. 
	
	For $n\in\en$, we denote the set of $n\times n$ real matrices by $\er^{n\times n}$ and, as usual, we define
	$$
	Z^{n\times n}=\big\{A=[a_{ij}]\in\er^{n\times n}: a_{ij}\leq0\text{ if }i\neq j\big\}.
	$$ 
	Given $A=[a_{ij}],B=[b_{ij}]\in\er^{n\times n}$, we write $A\geq B$ if $a_{ij}\geq b_{ij}$ for all $i,j\in[1,n]_\ze$. We denote by $A^T$ the transpose of matrix $A$. 
	\begin{definicao}
		Let $n\in\en$ and $A\in Z^{n\times n}$.
		
		We say that $A$ is an $M-$\emph{matrix} if all principal minors are non-negative. 
		
		We say that $A$ is a \emph{non-singular $M-$matrix} if all principal minors are positive. 
	\end{definicao}
	
	A large number of equivalent properties exist for identifying $M-$matrices and non-singular $M-$matrices. For a detailed discussion of some of these properties, we refer the reader to Chapter 6 of \cite{Berman+Plemmons}. In this paper, we need the following result:
	\begin{teorema}\cite[Theorem 4.4.6]{Berman+Plemmons}\label{teorema-B+P-1}
		Let $A\in Z^{n\times n}$.
		
		The matrix $A$ is an $M-$matrix if and only if, for any $\ov{z}=(z_1,\ldots,z_n)\neq\ov{0}$ and taking $\ov{y}=(y_1,\ldots,y_n)=\left(A\ov{z}^T\right)^T$, there is $i\in[1,n]_\ze$ such that $z_i\neq0$ and $z_iy_i\geq0$.
	\end{teorema}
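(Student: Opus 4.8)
The plan is to reduce the (possibly singular) case to the classical characterization of \emph{non-singular} $M$-matrices via the shift $A+\varepsilon I$. The engine I would use is the Fiedler--Pt\'ak sign condition for non-singular $M$-matrices: a matrix $B\in Z^{n\times n}$ is a non-singular $M$-matrix if and only if for every $\ov{z}\neq\ov{0}$ there is an index $i$ with $z_i\big(B\ov{z}^T\big)^T_i>0$ (the \emph{strict} analogue of the condition in the statement). I would combine this with the standard fact that, writing $A=sI-B$ with $B\geq0$ and $s$ large (so the off-diagonal entries $-a_{ij}$ and, for large $s$, the diagonal entries are non-negative), $A$ is an $M$-matrix if and only if $s\geq\rho(B)$, where $\rho$ denotes the spectral radius. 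Since $A+\varepsilon I=(s+\varepsilon)I-B$ is a non-singular $M$-matrix exactly when $s+\varepsilon>\rho(B)$, it follows that $A$ is an $M$-matrix if and only if $A+\varepsilon I$ is a non-singular $M$-matrix for \emph{every} $\varepsilon>0$; note $A+\varepsilon I\in Z^{n\times n}$, since the shift only raises the diagonal.

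For the forward implication, I assume $A$ is an $M$-matrix, fix $\ov{z}\neq\ov{0}$, and write $\ov{y}=(A\ov{z}^T)^T$. For each $\varepsilon>0$ the matrix $A+\varepsilon I$ is a non-singular $M$-matrix, so the strict condition yields an index $i=i(\varepsilon)$ with $z_{i}\big((A+\varepsilon I)\ov{z}^T\big)^T_{i}=z_{i}y_{i}+\varepsilon z_{i}^{2}>0$; in particular $z_{i}\neq0$. Taking $\varepsilon=1/k$ and using that the index ranges over the finite set $[1,n]_\ze$, some fixed $i^\ast$ occurs for infinitely many $k$; along that subsequence $z_{i^\ast}y_{i^\ast}+k^{-1}z_{i^\ast}^{2}>0$, and letting $k\to\infty$ gives $z_{i^\ast}y_{i^\ast}\geq0$ with $z_{i^\ast}\neq0$, which is exactly the asserted conclusion.

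For the converse, I assume the sign condition holds for $A$ and fix $\varepsilon>0$ and $\ov{z}\neq\ov{0}$. By hypothesis there is an $i$ with $z_i\neq0$ and $z_iy_i\geq0$, whence $z_i\big((A+\varepsilon I)\ov{z}^T\big)^T_i=z_iy_i+\varepsilon z_i^{2}\geq\varepsilon z_i^{2}>0$. Thus $A+\varepsilon I$ satisfies the strict Fiedler--Pt\'ak condition and is therefore a non-singular $M$-matrix, so all its principal minors are positive. Each such minor has the form $\det\!\big(A[S]+\varepsilon I\big)$ for an index set $S$, a polynomial in $\varepsilon$ that is positive for every $\varepsilon>0$; letting $\varepsilon\to0^+$ yields $\det A[S]\geq0$. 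Since $S$ is arbitrary, every principal minor of $A$ is non-negative, i.e.\ $A$ is an $M$-matrix.

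The main obstacle is the engine itself: justifying the strict characterization for non-singular $M$-matrices. I would prove its nontrivial direction by exhibiting a vector $\ov{u}>0$ with $A\ov{u}>0$ (available for non-singular $M$-matrices) and, given $\ov{z}\neq\ov{0}$, choosing the index $i$ that maximizes $|z_i|/u_i$; after adjusting the sign of $\ov{z}$ so that $z_i>0$, the inequalities $z_j\leq(\max_{j}|z_j|/u_j)\,u_j$ together with $a_{ij}\leq0$ for $j\neq i$ give $(A\ov{z}^T)^T_i\geq(\max_{j}|z_j|/u_j)\,(A\ov{u}^T)^T_i>0$, forcing $z_i(A\ov{z}^T)^T_i>0$. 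A secondary point requiring care is the limit in the forward direction, where the index $i(\varepsilon)$ depends on $\varepsilon$; the finiteness of $[1,n]_\ze$ and a subsequence argument are precisely what make the passage to the limit legitimate. I would also emphasize that working with $A+\varepsilon I$, rather than directly with a Perron eigenvector of $A$, is what sidesteps the reducibility difficulty (a non-strictly-positive eigenvector) inherent to the singular case.
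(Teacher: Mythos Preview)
The paper does not give its own proof of this statement; it is quoted verbatim from \cite[Theorem~4.4.6]{Berman+Plemmons} with no argument supplied, so there is nothing in the paper to compare your proposal against.

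That said, your route is correct. Reducing to the non-singular case via the shift $A+\varepsilon I$ is exactly the right device for handling possibly singular $M$-matrices, and both halves of your argument go through: in the forward direction the pigeonhole step (some index $i^\ast$ recurs along $\varepsilon=1/k$) is what legitimizes the limit, and in the converse the continuity of principal minors in $\varepsilon$ delivers non-negativity at $\varepsilon=0$. Your sketch of the Fiedler--Pt\'ak direction ``non-singular $M$-matrix $\Rightarrow$ strict sign condition'' via a vector $\ov u>0$ with $A\ov u>0$ and the index maximizing $|z_i|/u_i$ is the standard argument and is fine. The one piece you invoke but do not sketch is the reverse implication of Fiedler--Pt\'ak (strict sign condition $\Rightarrow$ non-singular $M$-matrix), needed in your converse to conclude that $A+\varepsilon I$ is a non-singular $M$-matrix; this is classical and short (the strict condition forces injectivity and then positivity of all real eigenvalues, or one appeals directly to the equivalent characterizations in \cite{Berman+Plemmons}), so this is a minor omission rather than a gap.
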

	
	\begin{definicao}\cite[Definition 2.1.2]{Berman+Plemmons}
		A square matrix $A\in\er^{n\times n}$ is \emph{reducible} if there is a permutation matrix $P$ such that 
		$$
		PAP^T=\left[\begin{array}{cc}
			B&0\\
			C&D
		\end{array}\right],
		$$
		where $B,D$ are square matrices, or if $n=1$ and $A=[0]$. Otherwise $A$ is called \emph{irreducible}.
	\end{definicao}
	
	We also need the following result:
	\begin{teorema}\cite[Theorem 6.4.16]{Berman+Plemmons}\label{teo-smmi}
		If $A$ is a singular irreducible $M-$matrix of order $n\in[2,\infty)_\ze$, then there is $\ov{d}=(d_1,\ldots,d_n)>0$ such that $A\ov{d}^T=\ov{0}$.
	\end{teorema}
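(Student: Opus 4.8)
The plan is to reduce the statement to the Perron--Frobenius theorem for irreducible nonnegative matrices by writing $A$ in the standard form $A=sI-B$ with $B\geq 0$. Since $A\in Z^{n\times n}$, its off-diagonal entries are nonpositive, so I would choose $s=\max_{i\in[1,n]_\ze}a_{ii}$ and set $B=sI-A$. Then every off-diagonal entry of $B$ equals $-a_{ij}\geq 0$, while every diagonal entry equals $s-a_{ii}\geq 0$ (note that the order-one principal minors $a_{ii}$ are nonnegative because $A$ is an $M-$matrix), so $B\geq 0$.

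First I would relate the $M-$matrix hypothesis to the spectral radius $\rho(B)$. A standard characterization of $M-$matrices says that $A=sI-B$ with $B\geq 0$ is an $M-$matrix exactly when $s\geq\rho(B)$. Because $A$ is singular, $0$ is an eigenvalue of $A=sI-B$, hence $s$ is a real, nonnegative eigenvalue of $B$, which forces $s\leq\rho(B)$. Combined with $s\geq\rho(B)$, this yields $s=\rho(B)$.

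Next I would transfer irreducibility from $A$ to $B$. Since $B$ and $A$ differ only on the diagonal, they share the same directed graph of off-diagonal nonzero entries, so $A$ irreducible implies $B$ irreducible; here the hypothesis $n\in[2,\infty)_\ze$ is precisely what rules out the degenerate case $A=[0]$ excluded by the definition of irreducibility. Now the Perron--Frobenius theorem applies to the irreducible nonnegative matrix $B$: its spectral radius $\rho(B)$ is an eigenvalue admitting a strictly positive eigenvector $\ov{d}=(d_1,\ldots,d_n)>0$, that is, $B\ov{d}^T=\rho(B)\ov{d}^T$.

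Finally, combining these facts gives $A\ov{d}^T=(sI-B)\ov{d}^T=(s-\rho(B))\ov{d}^T=\ov{0}$, as required. The main obstacle is not the algebra but securing the strict positivity of the eigenvector $\ov{d}$: this is exactly the content of the Perron--Frobenius theorem for irreducible nonnegative matrices, and the irreducibility hypothesis is indispensable, since for a reducible singular $M-$matrix the kernel vector may be forced to carry zero components.
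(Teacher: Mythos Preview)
Your argument is correct. Note, however, that the paper does not supply its own proof of this theorem: it is quoted verbatim from \cite[Theorem~6.4.16]{Berman+Plemmons} and used as a black box. So there is nothing in the paper to compare against. Your route via the representation $A=sI-B$ with $B\geq 0$ and the Perron--Frobenius theorem is exactly the classical argument (and is, in essence, how the result is obtained in \cite{Berman+Plemmons}): the $M-$matrix hypothesis gives $s\geq\rho(B)$, singularity forces $s$ to be an eigenvalue of $B$ and hence $s=\rho(B)$, irreducibility passes from $A$ to $B$ because they share off-diagonal zero patterns, and Perron--Frobenius then supplies the strictly positive null vector. Each step is sound, including the observation that $n\geq 2$ is needed to rule out the degenerate case $A=[0]$ in the definition of irreducibility.
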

	
	For a continuous function $f:\er\to\er$ such that $f(0)=0$, we denote by $f^*$ the function $f^*:[0,\infty)\to[0,\infty)$ defined by
	\begin{eqnarray}\label{def-f*}
		f^*(x)=\max\big\{f_+(x),f_+(-x),-f_-(x),-f_-(-x)\big\},\quad x\in[0,\infty),
	\end{eqnarray} 
	where the functions $f_+:\er\to\er$ and $f_-:\er\to\er$ are defined by
	$$
	f_+(x)=\left\{\begin{array}{ll}
		\max\{f(u):u\in[0,x]\},&x\geq0\\
		\\
		\max\{f(u):u\in[x,0]\},&x<0
	\end{array}\right.\,\text{and}\, f_-(x)=\left\{\begin{array}{ll}
		\min\{f(u):u\in[0,x]\},&x\geq0\\
		\\
		\min\{f(u):u\in[x,0]\},&x<0
	\end{array}\right.,
	$$
	respectively. It is straightforward to verify that $f^*$ is a non-decreasing continuous function such that $|f(x)|\leq f^*(|x|)$ for all $x\in\er$.

	For $n\in\en$, we denote by $\mathcal{B}^n$ the space of bounded functions $\ov{\psi}=(\psi_1,\ldots,\psi_n):(-\infty,0]_\ze\to\er^n$, with the norm 
	$$
	\left\|\ov{\psi}\right\|=\sup_{s\in(-\infty,0]_\ze}\left|\ov{\psi}(s)\right|.
	$$
	
	For $m\in\ze$ and $\ov{x}:\ze\to\er^n$ a vector function such that $\sup_{s\in(-\infty,0]_\ze}|\ov{x}(s)|<\infty$, we define $\ov{x}_m\in\mathcal{B}^n$ by
	$$
	\ov{x}_m(s)=\ov{x}(m+s),\quad s\in(-\infty,0]_\ze.
	$$ 
	In this paper, we investigate the following generalized discrete-time non-autonomous Hopfield neural network model that incorporates both infinite distributed delays and time-varying delays, with the latter also present in the leakage terms:
	\begin{eqnarray}\label{model-principal}
		x_i(m+1) &=&a_i(m)x_i(m-\nu_i(m))+ \displaystyle\sum_{j=1}^{n}\sum_{p=1}^P b_{ijp}(m)f_{ijp}\left(x_j(m-\tau_{ijp}(m))\right) \nonumber\\ 
		& & + \displaystyle\sum_{j=1}^{n} c_{ij} (m)  \sum_{l=0}^{\infty} \zeta_{ijl} g_{ij} (x_j(m-l)) + I_i(m),\,\, i\in[1,n]_\ze,\,m\in\en_0,
	\end{eqnarray}
	where $n,P\in\en$, $\zeta_{ijl}\in\er$, $a_i:\en_0\to(-1,1)$, $\nu_i,\tau_{ijp}:\en_0\to\en_0$, $b_{ijp},c_{ij},I_i:\en_0\to\er$, and $f_{ijp},g_{ij}:\er\to\er$ are functions. This discrete-time model generalizes several existing models in the literature, including those investigated in \cite{hong+ma,mohamad+gopalsamy,jj-jdea}.
	
	For model \eqref{model-principal} we always consider initial conditions on $\mathcal{B}^n$, i.e., for all $\sigma\in\en_0$ and $\ov{\psi}\in\mathcal{B}^n$ we consider
	\begin{eqnarray}\label{model-principal-IC}
		\ov{x}_\sigma=\ov{\psi}.
	\end{eqnarray}
	Trivially, for each $(\sigma,\ov{\psi})\in\en\times \mathcal{B}^n$, there exists a unique solution of initial value problem \eqref{model-principal}-\eqref{model-principal-IC} and we denoted it by  $\ov{x}(\cdot,\sigma,\ov{\psi})$.

	\section{Global attractiveness}\label{global-attractiveness}
	
	In this section, we give two global attractivity criteria for model \eqref{model-principal}. For this purpose, we consider the following set of hypotheses: For each $i,j\in[1,n]_\ze$ and $p\in[1,P]_\ze$,
	\begin{description}
		\item[(H1)] we have 
		$$
		a_i^+:=\sup_{m\in\en_0}|a_i(m)|<1;
		$$
		\item[(H2)] we have 
		$$
		\lim_m(m-\nu_i(m))=\lim_m(m-\tau_{ijp}(m))=\infty;
		$$
		\item[(H3)] the functions $b_{ijp}$ and $c_{ij}$ are bounded;
		\item [(H4)] we have
		$$
		\sum_{l=0}^\infty|\zeta_{ijl}|=1;
		$$
		\item[(H5)] we have
		$$
		\sum_{m=0}^\infty|I_i(m)|<\infty.
		$$
	\end{description}
	Depending on whether the activation functions $f_{ijp}$ and $g_{ij}$ are bounded, one of the following hypothesis is also assumed.
	\begin{description}
		\item[(H6)] the  functions $f_{ijp},g_{ij}:\er\to\er$ are continuous and there exist constants $F_{ijp},G_{ij}>0$ such that
		$$
		\left\{\begin{array}{ll}
			|f_{ijp}(u)|\leq F_{ijp},&\text{if }|u|>1\\
			|f_{ijp}(u)|<F_{ijp}|u|,&\text{if }|u|\leq1\text{ and }u\neq0\
		\end{array}\right.\text{ and }
		\left\{\begin{array}{ll}
			|g_{ij}(u)|\leq G_{ij},&\text{if }|u|>1\\
			|g_{ij}(u)|<G_{ij}|u|,&\text{if }|u|\leq1\text{ and }u\neq0\
		\end{array}\right.
		$$
	\end{description}
	or
	\begin{description}
		
		\item[(H6*)] the functions $f_{ijp},g_{ij}:\er\to\er$ are continuous and there exist constants $F_{ijp},G_{ij}>0$ such that
		$$
		|f_{ijp}(u)|<F_{ijp}|u|\text{ if }u\neq0\quad\text{ and }\quad
		|g_{ij}(u)|<G_{ij}|u|\text{ if }u\neq0.
		$$
	\end{description}
	\begin{rem}
		\emph{Note that, from (H6) or (H6*) we have $f_{ijp}(0)=g_{ij}(0)=0$ for all $j,i\in[1,n]_\ze$ and $p\in[1,P]_\ze$.}\\
	\end{rem}
	
	From (H3), we denote 
	\begin{eqnarray}\label{def-sup-coef}
		b_{ijp}^+=\sup_{m\in\en_0}|b_{ijp}(m)|\quad\text{and}\quad c_{ij}^+=\sup_{m\in\en_0}|c_{ij}(m)|,
	\end{eqnarray}
	and
	\begin{eqnarray}\label{def-limsup-coef}
		\hat{a}_i=\limsup_{m\to\infty}|a_i(m)|,\quad	\hat{b}_{ijp}=\limsup_{m\to\infty}|b_{ijp}(m)|\quad\text{and}\quad \hat{c}_{ij}=\limsup_{m\to\infty}|c_{ij}(m)|.
	\end{eqnarray}
	Now, consider the matrices $\mathcal{M}^+,\hat{\mathcal{M}}\in Z^{n\times n}$ defined by
	\begin{eqnarray}\label{def-matriz-M+}
		\mathcal{M}^+:=\mathcal{D}^+-\left[c_{ij}^+G_{ij}+\sum_{p=1}^Pb_{ijp}^+F_{ijp}\right]_{i,j=1}^n
	\end{eqnarray}
	where $\mathcal{D}^+:=diag(1-a_1^+,\ldots,1-a_n^+)$, and 
	\begin{eqnarray}\label{def-matriz-capeu}
		\hat{\mathcal{M}}:=\hat{\mathcal{D}}-\left[\hat{c}_{ij}G_{ij}+\sum_{p=1}^P\hat{b}_{ijp}F_{ijp}\right]_{i,j=1}^n,
	\end{eqnarray}
	where $\hat{\mathcal{D}}:=diag(1-\hat{a}_1,\ldots,1-\hat{a}_n)$, respectively. Note that we have $\mathcal{M}^+\leq\hat{\mathcal{M}}$.
	
	In order to obtain the stability criteria, we first need to prove that all solutions of \eqref{model-principal} are bounded.
	
	\begin{lema}\label{lem-solucoes-limitadas}
		Assume hypotheses (H1)-(H5).
		\begin{enumerate}
			\item If (H6) holds, then the solution $\ov{x}(\cdot,\sigma,\ov{\psi})$ of \eqref{model-principal}-\eqref{model-principal-IC} is bounded. 
			\item If (H6*) holds and $\mathcal{M}^+$ is a singular irreducible $M-$matrix, then the solution $\ov{x}(\cdot,\sigma,\ov{\psi})$ of \eqref{model-principal}-\eqref{model-principal-IC} is bounded.
		\end{enumerate}
	\end{lema}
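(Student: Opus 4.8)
The plan is to bound the solution by passing to absolute values in \eqref{model-principal} and then controlling a suitable (weighted) supremum of the trajectory over its whole past. For both items the common starting point is that, for $m\ge\sigma$ and each $i\in[1,n]_\ze$,
\[
|x_i(m+1)|\le a_i^+|x_i(m-\nu_i(m))|+\sum_{j=1}^n\sum_{p=1}^P b_{ijp}^+\big|f_{ijp}(x_j(m-\tau_{ijp}(m)))\big|+\sum_{j=1}^n c_{ij}^+\sum_{l=0}^\infty|\zeta_{ijl}|\,\big|g_{ij}(x_j(m-l))\big|+|I_i(m)|,
\]
where (H1) and (H3) make the coefficients $a_i^+,b_{ijp}^+,c_{ij}^+$ finite. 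Since $\nu_i(m),\tau_{ijp}(m)\ge0$ and $\ov{\psi}\in\mathcal{B}^n$, every delayed argument lies in $(-\infty,m]_\ze$, so $\sup_{s\le m}|\ov{x}(s)|$ is finite for each fixed $m$; the whole task is to bound it \emph{uniformly} in $m$, and (H4) is what collapses the infinite sum.

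For item 1 I would first note that (H6) in fact forces $|f_{ijp}(u)|\le F_{ijp}$ and $|g_{ij}(u)|\le G_{ij}$ for every $u\in\er$ (for $|u|\le1$ one has $|f_{ijp}(u)|<F_{ijp}|u|\le F_{ijp}$). Hence, using (H4), the two nonlinear sums are dominated by a constant and, by (H5), so is $|I_i(m)|$. Writing $a^+:=\max_i a_i^+<1$ and $M(m):=\max\{\|\ov{\psi}\|,\sup_{\sigma\le s\le m}|\ov{x}(s)|\}$, the inequality above becomes $|\ov{x}(m+1)|\le a^+M(m)+C$ for a constant $C$. A one-line induction then gives $M(m+1)\le\max\{M(m),C/(1-a^+)\}$: whenever the supremum is achieved at $m+1$ we get $M(m+1)\le a^+M(m+1)+C$, i.e. $(1-a^+)M(m+1)\le C$. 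Thus $M(m)\le\max\{\|\ov{\psi}\|,C/(1-a^+)\}$ for all $m\ge\sigma$. Note this uses only the strict contraction $a^+<1$ and the boundedness of the activations, and no $M-$matrix property.

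For item 2 the activations are merely sublinear, so the nonlinear terms cannot be absorbed into a constant. Let $\ov{d}=(d_1,\ldots,d_n)>0$ be the vector provided by Theorem \ref{teo-smmi} for the singular irreducible $M-$matrix $\mathcal{M}^+$, and set $y(m):=\max_i\sup_{s\le m}|x_i(s)|/d_i$. Using (H6*) to replace $|f_{ijp}(u)|,|g_{ij}(u)|$ by $F_{ijp}|u|,G_{ij}|u|$, the bound $|x_j(\cdot)|\le d_j\,y(m)$, and (H4), the starting inequality becomes
\[
\frac{|x_i(m+1)|}{d_i}\le\frac{1}{d_i}\Big(a_i^+d_i+\sum_{j=1}^n\big(c_{ij}^+G_{ij}+\sum_{p=1}^Pb_{ijp}^+F_{ijp}\big)d_j\Big)y(m)+\frac{|I_i(m)|}{d_i}.
\]
The crucial ingredient is the balance identity encoded in $\mathcal{M}^+\ov{d}^T=\ov{0}$, which reads $a_i^+d_i+\sum_j(c_{ij}^+G_{ij}+\sum_p b_{ijp}^+F_{ijp})d_j=d_i$; it collapses the parenthesis to $d_i$ and yields $|x_i(m+1)|/d_i\le y(m)+|I_i(m)|/d_i$. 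Maximizing over $i$ and using $y(m+1)=\max\{y(m),\max_i|x_i(m+1)|/d_i\}$ gives the telescoping estimate $y(m+1)\le y(m)+\til{I}(m)$ with $\til{I}(m):=\max_i|I_i(m)|/d_i$. Summing from $\sigma$ and invoking (H5), which forces $\sum_m\til{I}(m)<\infty$, bounds $y(m)$ by $y(\sigma)+\sum_{k\ge\sigma}\til{I}(k)<\infty$, and since $|x_i(m)|\le d_i\,y(m)$ the solution is bounded.

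The main obstacle is precisely the loss of strict contraction in item 2: because $\mathcal{M}^+$ is singular, the homogeneous part of the estimate is only non-expansive ($y(m+1)\le y(m)$ when $I\equiv0$) rather than contractive, so boundedness cannot follow from geometric decay. The remedy is to trade contractivity for the exact balance relation $\mathcal{M}^+\ov{d}^T=\ov{0}$ and let the summable forcing (H5) absorb the telescoped remainder. Irreducibility of $\mathcal{M}^+$ is essential here: it is what guarantees, through Theorem \ref{teo-smmi}, that the null vector $\ov{d}$ is \emph{strictly} positive, so that dividing by $d_i$ and defining the weighted supremum $y(m)$ is legitimate.
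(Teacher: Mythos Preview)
Your argument is correct in both parts. For item {\it i.} you proceed essentially as the paper does: bound the activations by the constants $F_{ijp},G_{ij}$ (which (H6) indeed gives for all $u$), use (H4) and (H5) to reduce to $|\ov{x}(m+1)|\le a^+M(m)+C$, and let the contraction $a^+<1$ finish; the paper packages the last step as an explicit geometric-series induction yielding $\|\ov{x}_{\sigma+q}\|\le\|\ov\psi\|+C\frac{1-(a^+)^q}{1-a^+}$, while your one-step sup argument lands on the equivalent bound $\max\{\|\ov\psi\|,C/(1-a^+)\}$. For item {\it ii.} your route is genuinely different and somewhat cleaner than the paper's. The paper argues by contradiction: after the same rescaling $y_i=d_i^{-1}x_i$ it shows that if $\ov y$ were unbounded there would be a first ``overshoot'' time $m^\star$ with $|y_i(m^\star)|>\|\ov y_{m^\star-1}\|+\alpha_i|I_i(m^\star-1)|$ (with a case-defined $\alpha_i$ depending on whether $a_i^+=0$ and on $d_i\gtrless1$), and then substitutes this into the equation and uses $\mathcal{M}^+\ov d^T=\ov 0$ to force $0<0$. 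You instead apply the balance identity $\mathcal{M}^+\ov d^T=\ov 0$ directly to obtain the non-expansive recursion $y(m+1)\le y(m)+\til I(m)$ in the weighted sup-norm and telescope against the summable forcing from (H5). Both proofs rest on exactly the same two ingredients---the strictly positive null vector $\ov d$ guaranteed by irreducibility (Theorem \ref{teo-smmi}) and the summability $\sum_m|I_i(m)|<\infty$---but your direct telescoping avoids the case analysis and produces the explicit bound $y(m)\le y(\sigma)+\sum_{k\ge\sigma}\til I(k)$.
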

	\begin{proof}
		Proof of item {\it i}.
		
		For $\sigma\in\en_0$ and $\ov\psi\in\mathcal{B}^n$, consider $\ov{x}(m)=(x_1(m),\ldots,x_n(m))=\ov{x}(m,\sigma,\ov\psi)$ the solution of \eqref{model-principal}-\eqref{model-principal-IC}. From \eqref{model-principal}, \eqref{def-sup-coef}, and  hypothesis (H6) we obtain   \begin{eqnarray*}
			|x_i(m+1)| \leq a_i^+|x_i(m-\nu_i(m))|+ \displaystyle\sum_{j=1}^{n}\sum_{p=1}^P b_{ijp}^+F_{ijp}  + \displaystyle\sum_{j=1}^{n} c_{ij}^+\sum_{l=0}^{\infty} |\zeta_{ijl}| G_{ij} + |I_i(m)|, 
		\end{eqnarray*}
		and by the hypotheses (H4) and (H5), we have 
		\begin{eqnarray*}
			|x_i(m+1)| \leq a_i^+|x_i(m-\nu_i(m))|+ \displaystyle\sum_{j=1}^{n}\left[\sum_{p=1}^P \left(b_{ijp}^+F_{ijp}\right)  +  c_{ij}^+ G_{ij}\right] + \sum_{l=0}^\infty|I_i(l)|,
		\end{eqnarray*}
		that is
		\begin{eqnarray}\label{eq-bounded-1}
			|x_i(m+1)| \leq a^+|x_i(m-\nu_i(m))|+C,\quad i\in[1,n]_\ze,\,\,m\in[\sigma,\infty)_\ze,
		\end{eqnarray}
		where $a^+=\dst\max_{i\in[0,n]_\ze}\left\{a_i^+\right\}$ and $C=\displaystyle\max_{i\in[0,n]_\ze}\left\{\sum_{j=1}^{n}\left[\sum_{p=1}^P \left(b_{ijp}^+F_{ijp}\right)  +  c_{ij}^+ G_{ij}\right] + \sum_{l=0}^\infty|I_i(l)|\right\}$.
		
		We claim that
		\begin{eqnarray}\label{hip-inducao}
			\|\ov{x}_{\sigma+q}\|\leq\|\ov{\psi}\|+C\sum_{k=0}^{q-1}(a^+)^k=\|\ov{\psi}\|+C\frac{1-(a^+)^q}{1-a^+},\quad q\in\en,
		\end{eqnarray}
		and, since $a^+\in[0,1)$, we conclude that the solution $\ov{x}(m)$ is bounded.
		
		In fact, for $q=1$, inequality \eqref{eq-bounded-1} implies
		$$
		|x_i(\sigma+1)|\leq a^+\|\ov\psi\|+C,\quad i\in[1,n]_\ze,
		$$
		and 
		$$
		\|\ov{x}_{\sigma+1}\|\leq\max\big\{\|\ov{\psi}\|,a^+\|\ov\psi\|+C\big\}\leq \|\ov{\psi}\|+C.
		$$
		Assuming that \eqref{hip-inducao} holds for some $q\in\en$, from \eqref{eq-bounded-1} we have
		$$
		|x_i(\sigma+q+1)|\leq a^+|x_i(\sigma+q-\nu_i(\sigma+q))|+C\leq a^+\|\ov{x}_{\sigma+q}\|+C
		$$
		and by the induction hypothesis we obtain
		\begin{eqnarray*}
			\|\ov{x}_{\sigma+q+1}\|&\leq&\max\left\{\|\ov{x}_{\sigma+q}\|,a^+\|\ov{x}_{\sigma+q}\|+C\right\}\nonumber\\
			&\leq&\max\left\{\|\ov{\psi}\|+C\sum_{k=0}^{q-1}(a^+)^k,a^+\left(\|\ov{\psi}\|+C\sum_{k=0}^{q-1}(a^+)^k\right)+C\right\}\nonumber\\
			&\leq&\max\left\{\|\ov{\psi}\|+C+C\sum_{k=1}^{q-1}(a^+)^k,a^+\|\ov{\psi}\|+C\sum_{k=1}^{q}(a^+)^k+C\right\}\nonumber\\
			&\leq&\|\ov{\psi}\|+C+C\sum_{k=1}^{q}(a^+)^k=\|\ov{\psi}\|+C\sum_{k=0}^q(a^+)^k,
		\end{eqnarray*}
		and the proof of item {\it i} is done.\\
		
		Proof of item {\it ii.}
		
		Since $\mathcal{M}^+$ is a singular irreducible $M-$matrix, Theorem \ref{teo-smmi} implies the existence of $\ov{d}=(d_1,\ldots,d_n)>0$ such that
		$\mathcal{M}^+\ov{d}^T=\ov{0}
		$, that is
		\begin{eqnarray}\label{cond-M-matrix}
			(1-a_i^+)d_i-\sum_{j=1}^nd_j\left(c_{ij}^+G_{ij}+\sum_{p=1}^Pb_{ijp}^+F_{ijp}\right)=0,\quad i\in[0,n]_\ze.
		\end{eqnarray}
		
		With the change $y_i(m)=d_i^{-1}x_i(m)$, for all $i\in[1,n]_\ze$, model \eqref{model-principal} assumes the form
		\begin{eqnarray}\label{model-principal-apos-mudanca-variavel}
			y_i(m+1) &=&a_i(m)y_i(m-\nu_i(m))+d_i^{-1} \displaystyle\sum_{j=1}^{n}\left[\sum_{p=1}^P b_{ijp}\left(m\right)f_{ijp}(d_jy_j(m-\tau_{ijp}(m)))\right. \nonumber\\ 
			& & + \displaystyle \left.c_{ij} (m)  \sum_{l=0}^{\infty} \zeta_{ijl} g_{ij} (d_jy_j(m-l))\right] + d_i^{-1}I_i(m),\,\, i\in[1,n]_\ze,\,m\in\en_0.
		\end{eqnarray}
		
		For $\sigma\in\en_0$ and $\ov{\psi}\in\mathcal{B}^n$, consider $\ov{x}(m)=(x_1(m),\ldots,x_n(m))=\ov{x}(m,\sigma,\ov{\psi})$ the solution of \eqref{model-principal}-\eqref{model-principal-IC}. 
		Suppose, by contradiction, that the solution  $\ov{x}(m)$ is unbounded. Consequently $\ov{y}(m)=(y_1(m),\ldots,y_n(m))=(d_1^{-1}x_1(m),\ldots,d_n^{-1}x_n(m))$ is also an unbounded solution of \eqref{model-principal-apos-mudanca-variavel} and there exist $i\in[1,n]_\ze$ and $m^\star\in(\sigma,\infty)_\ze$  such that
		\begin{eqnarray}\label{cond-desi-y}
			|y_i(m^\star)|>\|\ov{y}_{m^\star-1}\|+\alpha_i|I_i(m^\star-1)|,
		\end{eqnarray}
		where $\alpha_i=\left\{\begin{array}{ll}
			d_i^{-1},&\text{ if }a_i^+=0\\
			1+\frac{1-d_i}{d_ia_i^+},&\text{ if }d_i<1\text{ and }a_i^+\neq0\\
			1,&\text{ if }d_i\geq1\text{ and }a_i^+\neq0
		\end{array}\right.$.
		If not, then  we have 
		$$
		|y_i(m)|\leq\|\ov{y}_{m-1}\|+\alpha_i|I_i(m-1)|,\quad i\in[0,n]_\ze,\, m\in(\sigma,\infty)_\ze,
		$$
		thus
		$$
		|\ov{y}(m)|\leq\|\ov{y}_{m-1}\|+|\ov\alpha||\ov{I}(m-1)|,\quad m\in(\sigma,\infty)_\ze,
		$$
		where $\ov{\alpha}=(\alpha_1,\ldots,\alpha_n)$ and $\ov{I}(m)=(I_1(m),\ldots,I_n(m))$. Consequently
		$$
		|\ov{y}(m)|\leq\|\ov{y}_{\sigma}\|+|\ov{\alpha}|\sum_{l=0}^\infty|\ov{I}(l)|,\quad m\in[\sigma,\infty)_\ze,
		$$
		and, from (H5), we conclude that $\ov{y}(m)$ is bounded which is a contradiction. Thus \eqref{cond-desi-y} holds.
		
		From \eqref{model-principal-apos-mudanca-variavel} and hypotheses (H3) and (H6*) we have
		\begin{eqnarray*}
			|y_i(m^\star)| &\leq&a_i^+|y_i(m^\star-1-\nu_i(m^\star-1))|+ \displaystyle\sum_{j=1}^{n}\frac{d_j}{d_i}\left[\sum_{p=1}^P b_{ijp}^+F_{ijp}|y_j(m^\star-1-\tau_{ijp}(m^\star-1))|\right. \nonumber\\ 
			& & + \displaystyle \left.c_{ij}^+   \sum_{l=0}^{\infty} |\zeta_{ijl}| G_{ij} |y_j(m^\star-1-l)|\right] + d_i^{-1}|I_i(m^\star-1)|,
		\end{eqnarray*}
		and from (H1), (H4), and \eqref{cond-desi-y},  we obtain
		\begin{eqnarray*}
			|y_i(m^\star)| &<&a_i^+|y_i(m^\star)|-a_i^+\alpha_i|I_i(m^\star-1)|+ \displaystyle\sum_{j=1}^{n}\frac{d_j}{d_i}\left[\sum_{p=1}^P b_{ijp}^+F_{ijp}\big(|y_i(m^\star)|-\alpha_i|I_i(m^\star-1)|\big)\right. \nonumber\\ 
			& & + \displaystyle c_{ij}^+ G_{ij}\big( |y_i(m^\star)|-\alpha_i|I_i(m^\star-1)|\big)\bigg] + d_i^{-1}|I_i(m^\star-1)|,
		\end{eqnarray*}
		which is equivalent to
		\begin{eqnarray*}
			\bigg[d_i(1-a_i^+)- \lefteqn{\displaystyle\sum_{j=1}^{n}d_j\bigg(c_{ij}^+G_{ij}+\sum_{p=1}^P b_{ijp}^+F_{ijp}\bigg)\bigg]|y_i(m^\star)|}\\ &
			&<\bigg[1-d_ia_i^+\alpha_i- \alpha_i\displaystyle\sum_{j=1}^{n}d_j\bigg(c_{ij}^+G_{ij}+\sum_{p=1}^P b_{ijp}^+F_{ijp}\bigg)\bigg] |I_i(m^\star-1)|.
		\end{eqnarray*}
		From \eqref{cond-M-matrix} we conclude that
		\begin{eqnarray}\label{cond-para-abs}
			0<\bigg[1-d_ia_i^+\alpha_i- \alpha_i\displaystyle\sum_{j=1}^{n}d_j\bigg(c_{ij}^+G_{ij}+\sum_{p=1}^P b_{ijp}^+F_{ijp}\bigg)\bigg] |I_i(m^\star-1)|.
		\end{eqnarray}
		
		If $a_i^+=0$, then $\alpha_i=d_i^{-1}$ thus, from \eqref{cond-para-abs} and \eqref{cond-M-matrix} with $a_i^+=0$, we obtain
		$$
		0<\bigg[d_i- \displaystyle\sum_{j=1}^{n}d_j\bigg(c_{ij}^+G_{ij}+\sum_{p=1}^P b_{ijp}^+F_{ijp}\bigg)\bigg] |I_i(m^\star-1)|=0,
		$$
		which is not possible.
		
		If $a_i^+\neq0$ and $d_i\geq1$, then $\alpha_i=1$ thus, from \eqref{cond-M-matrix} and \eqref{cond-para-abs}, we obtain
		$$
		0<\bigg[d_i(1-a_i^+)- \displaystyle\sum_{j=1}^{n}d_j\bigg(c_{ij}^+G_{ij}+\sum_{p=1}^P b_{ijp}^+F_{ijp}\bigg)\bigg] |I_i(m^\star-1)|+(1-d_i)|I_i(m^\star-1)|\leq0,
		$$
		which is once again not possible.
		
		If $a_i^+\neq0$ and  $d_i<1$, then $\alpha_i=1+\frac{1-d_i}{d_ia_i^+}$ thus, from \eqref{cond-M-matrix} and \eqref{cond-para-abs}, we obtain
		$$
		0<\bigg[d_i(1-a_i^+)- \displaystyle\sum_{j=1}^{n}d_j\bigg(c_{ij}^+G_{ij}+\sum_{p=1}^P b_{ijp}^+F_{ijp}\bigg)\bigg] |I_i(m^\star-1)|=0,
		$$
		which is a contradiction again. Therefore the proof is concluded.
	\end{proof}
	
	In the next Lemma, recall the notations \eqref{def-f*}, \eqref{def-sup-coef}, and \eqref{def-limsup-coef}. 
	\begin{lema}\label{lem-sistema-equacoes}
		Assume hypotheses (H1)-(H5).
		\begin{enumerate}
			\item If (H6) holds, then the solution $\ov{x}(\cdot,\sigma,\ov{\psi})$ of \eqref{model-principal}-\eqref{model-principal-IC} satisfies $\dst\limsup_{m\to\infty}|x_i(m)|\leq S_i$, where $\ov{S}=(S_1,\ldots,S_n)$ satisfies the system
			\begin{eqnarray}\label{equ-aplicar-m-matriz}
				S_i=\frac{1}{1-\hat{a}_i}\sum_{j=1}^n\left(\hat{c}_{ij}g_{ij}^*(S_j)+\sum_{p=1}^P\hat{b}_{ijp}f_{ijp}^*(S_j)\right),\quad i\in[1,n]_\ze.
			\end{eqnarray}
			\item If (H6*) holds and $\mathcal{M}^+$ is a singular irreducible $M-$matrix, then the solution $\ov{x}(\cdot,\sigma,\ov{\psi})$ of \eqref{model-principal}-\eqref{model-principal-IC} satisfies $\dst\limsup_{m\to\infty}|x_i(m)|\leq S_i$, where $\ov{S}=(S_1,\ldots,S_n)$ satisfies the system
			\begin{eqnarray}\label{equ-aplicar-m-matriz-irredutivel}
				S_i=\frac{1}{1-a_i^+}\sum_{j=1}^n\left(c_{ij}^+g_{ij}^*(S_j)+\sum_{p=1}^Pb_{ijp}^+f_{ijp}^*(S_j)\right),\quad i\in[1,n]_\ze.
			\end{eqnarray}
		\end{enumerate}
	\end{lema}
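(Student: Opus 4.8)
The plan is to identify the vector of superior limits $\ov{L}=(L_1,\ldots,L_n)$, where $L_i:=\dst\limsup_{m\to\infty}|x_i(m)|$, as a subsolution of the relevant fixed-point system, and then to produce an actual solution $\ov{S}\geq\ov{L}$ of that system by monotone iteration. By Lemma \ref{lem-solucoes-limitadas} the solution $\ov{x}(\cdot,\sigma,\ov\psi)$ is bounded, so each $L_i$ is finite; fix $K>0$ with $|x_j(m)|\leq K$ for all $j\in[1,n]_\ze$ and all $m$. In both items I would define the operator $T:[0,\infty)^n\to[0,\infty)^n$ whose $i$-th component is the right-hand side of the corresponding system, using the constants $\hat a_i,\hat b_{ijp},\hat c_{ij}$ for item {\it i.} and $a_i^+,b_{ijp}^+,c_{ij}^+$ for item {\it ii.} Since each $f_{ijp}^*,g_{ij}^*$ is continuous and non-decreasing and the coefficients are non-negative, $T$ is continuous and order-preserving, and $1-\hat a_i>0$, $1-a_i^+>0$ by (H1).

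The central step is the subsolution inequality $\ov{L}\leq T(\ov{L})$. Starting from the modulus estimate of \eqref{model-principal}, using $|f_{ijp}(u)|\leq f_{ijp}^*(|u|)$, $|g_{ij}(u)|\leq g_{ij}^*(|u|)$, I would bound $|x_i(m+1)|$ by the coefficients times the starred functions evaluated at the delayed states, and then pass to the $\limsup$ as $m\to\infty$. For the leakage and finite-delay convolution terms, (H2) gives $m-\nu_i(m)\to\infty$ and $m-\tau_{ijp}(m)\to\infty$, so $\limsup_m|x_j(m-\tau_{ijp}(m))|\leq L_j$; combined with continuity and monotonicity of the starred functions (yielding $\limsup_m f_{ijp}^*(|y_m|)\leq f_{ijp}^*(\limsup_m|y_m|)$) and the product rule $\limsup(u_mv_m)\leq(\limsup u_m)(\limsup v_m)$ for non-negative sequences, these contribute $\hat b_{ijp}f_{ijp}^*(L_j)$ (resp.\ $b_{ijp}^+f_{ijp}^*(L_j)$), while $|I_i(m)|\to0$ by (H5). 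Dividing by $1-\hat a_i$ (resp.\ $1-a_i^+$) then gives $L_i\leq T(\ov{L})_i$.

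The main obstacle is the infinite distributed-delay term $\sum_{l=0}^\infty|\zeta_{ijl}|g_{ij}^*(|x_j(m-l)|)$, since the index ranges to infinity and the states $x_j(m-l)$ for large $l$ need not be near $L_j$. I would treat this by truncation: given $\varepsilon>0$, use (H4) to choose $N$ with $\sum_{l>N}|\zeta_{ijl}|<\varepsilon$; for the finitely many $l\leq N$ one has $m-l\to\infty$, hence $|x_j(m-l)|\leq L_j+\varepsilon$ eventually and $g_{ij}^*(|x_j(m-l)|)\leq g_{ij}^*(L_j+\varepsilon)$, whereas the tail is dominated by $\varepsilon\,g_{ij}^*(K)$ using the uniform bound $K$. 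Letting $m\to\infty$ and then $\varepsilon\to0$ (continuity of $g_{ij}^*$, together with $\sum_l|\zeta_{ijl}|=1$) yields $\limsup_m\sum_{l=0}^\infty|\zeta_{ijl}|g_{ij}^*(|x_j(m-l)|)\leq g_{ij}^*(L_j)$, so this term contributes $\hat c_{ij}g_{ij}^*(L_j)$ (resp.\ $c_{ij}^+g_{ij}^*(L_j)$), completing $\ov{L}\leq T(\ov{L})$.

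Finally I would produce a fixed point $\ov{S}\geq\ov{L}$ by monotone iteration from $\ov{L}$. For item {\it i.}, (H6) makes each $f_{ijp}^*,g_{ij}^*$ bounded by $F_{ijp},G_{ij}$, so $T$ maps $[0,\infty)^n$ into the compact box $\prod_{i=1}^n[0,R_i]$ with $R_i=(1-\hat a_i)^{-1}\sum_{j=1}^n(\hat c_{ij}G_{ij}+\sum_{p=1}^P\hat b_{ijp}F_{ijp})$; thus $\ov{R}:=(R_1,\ldots,R_n)$ is a supersolution, $\ov{L}\leq T(\ov{L})\leq\ov{R}$, and the non-decreasing iterates $T^k(\ov{L})$ stay in $[\ov{L},\ov{R}]$ and converge by continuity to a fixed point $\ov{S}=T(\ov{S})\geq\ov{L}$ solving \eqref{equ-aplicar-m-matriz}. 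For item {\it ii.}, (H6*) only gives $f_{ijp}^*(x)\leq F_{ijp}x$ and $g_{ij}^*(x)\leq G_{ij}x$, so $T$ need not have compact range; instead I invoke that $\mathcal{M}^+$, defined in \eqref{def-matriz-M+}, is a singular irreducible $M$-matrix, so Theorem \ref{teo-smmi} yields $\ov{d}>0$ with $\mathcal{M}^+\ov{d}^T=\ov{0}$, i.e.\ relation \eqref{cond-M-matrix}. A direct substitution using \eqref{cond-M-matrix} gives $T(t\ov{d})\leq t\ov{d}$ for every $t>0$, so $t\ov{d}$ is a supersolution; choosing $t$ large enough that $t\ov{d}\geq\ov{L}$ (possible since $\ov{d}>0$), the monotone iterates $T^k(\ov{L})$ remain in $[\ov{L},t\ov{d}]$ and converge to a fixed point $\ov{S}\geq\ov{L}$ solving \eqref{equ-aplicar-m-matriz-irredutivel}. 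In both cases $\dst\limsup_{m\to\infty}|x_i(m)|=L_i\leq S_i$, as claimed.
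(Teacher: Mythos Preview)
Your argument is correct, and it follows a genuinely different route from the paper's. The paper also begins with the boundedness from Lemma~\ref{lem-solucoes-limitadas} and the same truncation of the infinite-delay sum, but it then iterates \emph{from above}: in item~{\it i.} it starts from the coarse constant bound $S_{i,0}=(1-\hat a_i)^{-1}\sum_j(\hat c_{ij}G_{ij}+\sum_p\hat b_{ijp}F_{ijp})$, and at each step re-runs the whole analytic estimate (choice of $m_0$, truncation of the kernel, etc.) to show $\limsup_m|x_i(m)|\leq S_{i,q}$ with $S_{i,q}=T(\ov S_{q-1})_i$, obtaining a non-increasing sequence converging to the fixed point $\ov S$. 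In item~{\it ii.} the paper proceeds analogously with $a_i^+,b_{ijp}^+,c_{ij}^+$, showing $M_i\leq T(\ov M)_i\leq S_{i,0}$ and then invoking the same iterative scheme.

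Your version is more economical: you prove the subsolution inequality $\ov L\leq T(\ov L)$ \emph{once}, and then the remaining work is purely order-theoretic monotone iteration from below. The analytic content (handling of the delays, the kernel truncation, passing $\varepsilon\to0$) is identical to the paper's, but you do not have to repeat it at every step. For item~{\it ii.}, your explicit identification of the supersolution $t\ov d$ via $\mathcal M^+\ov d^T=\ov 0$ (so that $T(t\ov d)\leq t\ov d$ thanks to $f_{ijp}^*(x)\leq F_{ijp}x$, $g_{ij}^*(x)\leq G_{ij}x$ from (H6*)) is a clean way to guarantee that the increasing iterates $T^k(\ov L)$ stay bounded; the paper handles this step more implicitly by pointing back to the scheme of item~{\it i.}. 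Either approach yields a fixed point $\ov S\geq\ov L$, which is all the statement requires.
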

	\begin{proof}
		Proof of item {\it i.}\\
		For $\sigma\in\en_0$ and $\ov\psi\in\mathcal{B}^n$, consider $\ov{x}(m)=(x_1(m),\ldots,x_n(m))=\ov{x}(m,\sigma,\ov\psi)$ the solution of \eqref{model-principal}-\eqref{model-principal-IC}. Item {\it i.} of Lemma \ref{lem-solucoes-limitadas} assures that $\ov{x}(m)$ is bounded, thus $\dst\limsup_{m\to\infty}|x_i(m)|\in\er$ for all $i\in[1,n]_\ze$.  
		
		From \eqref{model-principal} and hypotheses (H4) and (H6) we have
		\begin{eqnarray*}
			|x_i(m+1)| \leq |a_i(m)||x_i(m-\nu_i(m))|+ \displaystyle\sum_{j=1}^{n}\left(\sum_{p=1}^P |b_{ijp}(m)|F_{ijp}  +  |c_{ij}(m)| G_{ij}\right) + |I_i(m)|, 
		\end{eqnarray*}
		and from (H5) and \eqref{def-limsup-coef} we obtain
		$$
		\limsup_{m\to\infty}|x_i(m)| \leq \hat{a}_i\limsup_{m\to\infty}|x_i(m-\nu_i(m))|+ \displaystyle\sum_{j=1}^{n}\left(\sum_{p=1}^P \hat{b}_{ijp} F_{ijp}  +  \hat{c}_{ij} G_{ij}\right).
		$$
		The solution $\ov{x}(m)$ is bounded, $\dst\limsup_{m\to\infty}|x_i(m)|\geq \limsup_{m\to\infty}|x_i(m-\nu_i(m))|$ by hypothesis (H2), and  $\hat{a}_i\in[0,1)$, then
		\begin{eqnarray}\label{def-S_i0}
			\limsup_{m\to\infty}|x_i(m)| \leq \frac{1}{1-\hat{a}_i} \displaystyle\sum_{j=1}^{n}\left(\sum_{p=1}^P \hat{b}_{ijp} F_{ijp}  +  \hat{c}_{ij} G_{ij}\right)=:S_{i,0}.
		\end{eqnarray}
		Consequently, given $\varepsilon>0$ there is $m_0\in[\sigma,\infty)_\ze$ such that
		\begin{eqnarray}\label{estimativa-xi}
			|x_i(m)|\leq S_{i,0}+\varepsilon,\quad i\in[1,n]_\ze,\,m\in[m_0,\infty)_\ze.
		\end{eqnarray}
		Form (H2), for each $i,j\in[1,n]_\ze$ and $p\in[1,P]_\ze$, there is $m_{ijp}\in\en$ such that $m-\tau_{ijp}(m)\geq m_0$ for $m\in[m_{ijp},\infty)_\ze$. Defining $\tilde{m}=\max_{ijp}\{m_{ijp}\}$ we have
		\begin{eqnarray}\label{estimativa-1-dos-Ss}
			|x_j(m-\tau_{ijp}(m))|\leq S_{j,0}+\varepsilon,\quad i,j\in[1,n]_\ze, p\in[1,P]_\ze,\,m\in[\tilde{m},\infty)_\ze.
		\end{eqnarray}
		From (H4), there is $l^\star=l^\star(\varepsilon)\in\en$ such that
		\begin{eqnarray}\label{estimativa-controlo-integral}
			\sum_{l=l^\star}^\infty|\zeta_{ijl}|<\varepsilon,\quad i,j\in[1,n]_\ze.
		\end{eqnarray}
		Defining $m^\star:=m_0+l^\star$, for $m\in[ m^\star,\infty)_\ze$ and $l\in[0,l^\star-1]_\ze$ we have $m-l>m_0$ thus, from \eqref{estimativa-xi} we obtain
		\begin{eqnarray}\label{estimativa-2-dos-Ss}
			|x_i(m-l)|\leq S_{i,0}+\varepsilon,\quad i\in[1,n]_\ze,\,m\in[ m^\star,\infty)_\ze,\,l\in[0,l^\star-1]_\ze.
		\end{eqnarray}
		
		Recalling the definition of $f^*$ for a given function $f$ in \eqref{def-f*}, we know that, for each $i,j\in[1,n]_\ze$ and $p\in[1,P]_\ze$, $f^*_{ijp}$ and $g^*_{ij}$ are non-decreasing continuous functions such that 
		\begin{eqnarray*}
			|f_{ijp}(u)|\leq f^*_{ijp}(|u|),\quad\text{and}\quad|g_{ij}(u)|\leq g^*_{ij}(|u|),\quad u\in\er.
		\end{eqnarray*}
		More, from (H6) it is easy to conclude that $f^*_{ijp}$ and $g^*_{ij}$ satisfy (H6) with the same constants $F_{ijp}$ and $G_{ij}$, respectively.
		
		From model \eqref{model-principal} and the properties of $f^*_{ijp}$ and $g^*_{ij}$ described above, for all $i\in[1,n]_\ze$ and $m\in[\sigma,\infty)_\ze$ we have
		\begin{eqnarray*}
			|x_i(m+1)| &\leq&|a_i(m)||x_i(m-\nu_i(m))|+ \displaystyle\sum_{j=1}^{n}\sum_{p=1}^P |b_{ijp}(m)|f^*_{ijp}\left(|x_j(m-\tau_{ijp}(m))|\right) \nonumber\\ 
			& & + \displaystyle\sum_{j=1}^{n} |c_{ij} (m)|  \sum_{l=0}^{\infty}|\zeta_{ijl}|g^*_{ij} \left(|x_j(m-l)|\right) + |I_i(m)|.
		\end{eqnarray*}
		
		The functions $f^*_{ijp}$ and $g^*_{ij}$ are non-decreasing satisfying (H6) thus, from (H4), \eqref{estimativa-1-dos-Ss}, \eqref{estimativa-controlo-integral}, and \eqref{estimativa-2-dos-Ss}, for $m\geq\max\{\tilde{m},m^\star\}$ we have
		\begin{eqnarray}\label{etimativa-para-item-ii}
			|x_i(m+1)| &\leq&|a_i(m)||x_i(m-\nu_i(m))|+ \displaystyle\sum_{j=1}^{n}\sum_{p=1}^P |b_{ijp}(m)|f^*_{ijp}\left(S_{j,0}+\varepsilon\right) \nonumber\\ 
			& & + \displaystyle\sum_{j=1}^{n} |c_{ij} (m)|  \left(\sum_{l=0}^{l^*-1}|\zeta_{ijl}|g^*_{ij} \left(S_{j,0}+\varepsilon\right)+\sum_{l=l^*}^{\infty}|\zeta_{ijl}|g^*_{ij} \left(|x_j(m-l)|\right)\right) + |I_i(m)|\nonumber\\
			&\leq&|a_i(m)||x_i(m-\nu_i(m))|+ \displaystyle\sum_{j=1}^{n}\sum_{p=1}^P |b_{ijp}(m)|f^*_{ijp}\left(S_{j,0}+\varepsilon\right) \nonumber\\ 
			& & + \displaystyle\sum_{j=1}^{n} |c_{ij} (m)|  \left(g^*_{ij} \left(S_{j,0}+\varepsilon\right)+\varepsilon G_{ij}\right) + |I_i(m)|.
		\end{eqnarray}
		Consequently, from (H5) and \eqref{def-limsup-coef},
		$$
		\limsup_{m\to\infty}|x_i(m)| \leq \frac{1}{1-\hat{a}_i} \displaystyle\sum_{j=1}^{n}\left(\sum_{p=1}^P \hat{b}_{ijp} f^*_{ijp}\left(S_{j,0}+\varepsilon\right)  +  \hat{c}_{ij} g^*_{ij} \left(S_{j,0}+\varepsilon\right)+\hat{c}_{ij}\varepsilon G_{ij}\right).
		$$
		As  $f^*_{ijp}$ and $g^*_{ij}$ are continuous and $\varepsilon>0$ is arbitrary, we conclude that
		$$
		\limsup_{m\to\infty}|x_i(m)| \leq \frac{1}{1-\hat{a}_i} \displaystyle\sum_{j=1}^{n}\left(\sum_{p=1}^P \hat{b}_{ijp} f^*_{ijp}\left(S_{j,0}\right)  +  \hat{c}_{ij} g^*_{ij} \left(S_{j,0}\right)\right)=:S_{i,1},\quad i\in[1,n]_\ze.
		$$
		As $f^*_{ijp}(S_{j,0})\leq F_{ijp}$, $g^*_{ij}(S_{j,0})\leq G_{ij}$, and \eqref{def-S_i0}, then $S_{i,1}\leq S_{i,0}$ for all $i\in[1,n]_\ze$. Iterating the previous process, for each $i\in[1,n]_\ze$, we obtain a non-increasing sequence $\left(S_{i,q}\right)_{q\in\en}$ such that $S_{i,q}\geq0$, $\dst\limsup_{m\to\infty}|x_i(m)|\leq S_{i,q}$, and
		\begin{eqnarray}\label{relacao-entre-Ss}
			S_{i,q}=\frac{1}{1-\hat{a_i}}\sum_{j=1}^n\left(\sum_{p=1}^P \hat{b}_{ijp} f^*_{ijp}\left(S_{j,q-1}\right)  +  \hat{c}_{ij} g^*_{ij} \left(S_{j,q-1}\right)\right),\quad q\in\en.
		\end{eqnarray}
		Defining $S_i:=\dst\lim_{q\to\infty}S_{i,q}$, we have
		$$
		\limsup_{m\to\infty}|x_i(m)|\leq S_i,\quad i\in[1,n]_\ze.
		$$
		Finally, from \eqref{relacao-entre-Ss} and the continuity of $f^*_{ijp}$ and $g^*_{ij}$ we conclude that
		\begin{eqnarray*}
			S_i&=&\lim_{q\to\infty} S_{i,q}=\frac{1}{1-\hat{a_i}}\sum_{j=1}^n\left(\sum_{p=1}^P \hat{b}_{ijp} f^*_{ijp}\left(\lim_{q\to\infty}S_{j,q-1}\right)  +  \hat{c}_{ij} g^*_{ij} \left(\lim_{q\to\infty}S_{j,q-1}\right)\right)\\
			&=&\frac{1}{1-\hat{a_i}}\sum_{j=1}^n\left(\sum_{p=1}^P \hat{b}_{ijp} f^*_{ijp}\left(S_{j}\right)  +  \hat{c}_{ij} g^*_{ij} \left(S_{j}\right)\right).
		\end{eqnarray*}
		\\
		Proof of item {\it ii.}\\
		
		For $\sigma\in\en_0$ and $\ov\psi\in\mathcal{B}^n$, consider $\ov{x}(m)=(x_1(m),\ldots,x_n(m))=\ov{x}(m,\sigma,\ov\psi)$ the solution of \eqref{model-principal}-\eqref{model-principal-IC}. Item {\it ii.} of Lemma \ref{lem-solucoes-limitadas} assures that $\ov{x}(m)$ is bounded, thus we may define
		$$
		M_i:=\dst\limsup_{m\to\infty}|x_i(m)|\in\er,\quad i\in[1,n]_\ze.
		$$
		For each $i\in[1,n]_\ze$ also define $S_{i,0}:=\dst\frac{1}{1-a_i^+}\sum_{j=1}^{n}\left(\sum_{p=1}^P b^+_{ijp} F_{ijp}M_j +  c^+_{ij} G_{ij} M_j\right)$.
		
		Given $\varepsilon>0$ there is $m_0\in[\sigma,\infty)_\ze$ such that
		$$
		|x_i(m)|\leq M_i+\varepsilon,\quad i\in[1,n]_\ze,\,m\in[m_0,\infty)_\ze.
		$$
		Arguing as in the previous item, we conclude an estimation analogous to \eqref{etimativa-para-item-ii}, i.e. there is $\mathfrak{m}\in[m_0,\infty)_\ze$ such that, for all $i\in[1,n]_\ze$ and $m\in[\mathfrak{m},\infty)_\ze$,
		\begin{eqnarray*}
			|x_i(m+1)| 
			&\leq&|a_i(m)||x_i(m-\nu_i(m))|+ \displaystyle\sum_{j=1}^{n}\sum_{p=1}^P |b_{ijp}(m)|f^*_{ijp}\left(M_j+\varepsilon\right) \nonumber\\ 
			& & + \displaystyle\sum_{j=1}^{n} |c_{ij} (m)|  \left(g^*_{ij} \left(M_j+\varepsilon\right)+\varepsilon G_{ij}\right) + |I_i(m)|.
		\end{eqnarray*}
		Consequently,
		\begin{eqnarray*}
			|x_i(m+1)| 
			&\leq&a_i^+|x_i(m-\nu_i(m))|+ \displaystyle\sum_{j=1}^{n}\sum_{p=1}^P b_{ijp}^+f^*_{ijp}\left(M_j+\varepsilon\right) \nonumber\\ 
			& & + \displaystyle\sum_{j=1}^{n} c_{ij}^+  \left(g^*_{ij} \left(M_j+\varepsilon\right)+\varepsilon G_{ij}\right) + |I_i(m)|,
		\end{eqnarray*}
		and applying again the $\limsup$ in both sides and taking $\varepsilon\to0^+$, we obtain
		$$
		M_i=\limsup_{m\to\infty}|x_i(m)|\leq \frac{1}{1-a_i^+} \displaystyle\sum_{j=1}^{n}\left(\sum_{p=1}^P b^+_{ijp} f^*_{ijp}\left(M_j\right)  +  c^+_{ij} g^*_{ij} \left(M_j\right)\right)=:S_{i,1},\quad i\in[1,n]_\ze.
		$$
		As $f_{ijp}$ and $g_{ij}$ satisfy (H6*), then $f^*_{ijp}$ and $g^*_{ij}$ also satisfy (H6*), and consequently $S_{i,1}\leq S_{i,0}$ for all $i\in[1,n]_\ze$.
		
		Finally, by repeating the arguments presented in the proof of the previous item {\em i.}, we conclude the existence of $\ov{S}=(S_1,\ldots,S_n)\geq0$ such that $\dst\limsup_{m\to\infty}|x_i(m)|\leq S_i$, for all $i\in[1,n]_\ze$, and \eqref{equ-aplicar-m-matriz-irredutivel} holds.
	\end{proof}
	
	Now we are in position to prove the main result of this paper. 
	
	\begin{teorema}\label{teo-principal}
		Assume hypotheses (H1)-(H5).
		\begin{enumerate}
			\item If (H6) holds and $\hat{\mathcal{M}}$ is an $M-$matrix, then the solution $\ov{x}(\cdot,\sigma,\ov{\psi})$ of \eqref{model-principal}-\eqref{model-principal-IC} satisfies
			$$
			\lim_{m\to\infty}|\ov{x}(m,\sigma,\ov\psi)|=0;
			$$
			\item If (H6*) holds and $\mathcal{M}^+$ is a singular irreducible $M-$matrix, then the solution $\ov{x}(\cdot,\sigma,\ov{\psi})$ of \eqref{model-principal}-\eqref{model-principal-IC} satisfies
			$$
			\lim_{m\to\infty}|\ov{x}(m,\sigma,\ov\psi)|=0.
			$$
		\end{enumerate}
	\end{teorema}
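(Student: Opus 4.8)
The plan is to deduce both items from Lemma~\ref{lem-sistema-equacoes} by showing that the limit-superior vector $\ov{S}$ it produces is in fact $\ov{0}$. Fix $(\sigma,\ov\psi)$ and write $\ov{x}(m)=\ov{x}(m,\sigma,\ov\psi)$. For item~{\it i.}, Lemma~\ref{lem-sistema-equacoes}{\it i.} gives $\limsup_{m\to\infty}|x_i(m)|\le S_i$ with $\ov{S}=(S_1,\ldots,S_n)\ge0$ solving \eqref{equ-aplicar-m-matriz}; for item~{\it ii.}, Lemma~\ref{lem-sistema-equacoes}{\it ii.} gives the same bound with $\ov{S}$ solving \eqref{equ-aplicar-m-matriz-irredutivel}. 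Since $|x_i(m)|\ge0$, once $\ov{S}=\ov{0}$ is established we obtain $\lim_{m\to\infty}|x_i(m)|=0$ for every $i\in[1,n]_\ze$, and hence $\lim_{m\to\infty}|\ov{x}(m)|=0$ by the definition of the norm on $\er^n$.

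For item~{\it i.}, set $\ov{y}=(y_1,\ldots,y_n)=\big(\hat{\mathcal{M}}\ov{S}^T\big)^T$. Using \eqref{def-matriz-capeu} together with the fixed-point identity \eqref{equ-aplicar-m-matriz} to substitute $(1-\hat{a}_i)S_i$, I would rewrite
\[
y_i=\sum_{j=1}^n\hat{c}_{ij}\big(g^*_{ij}(S_j)-G_{ij}S_j\big)+\sum_{j=1}^n\sum_{p=1}^P\hat{b}_{ijp}\big(f^*_{ijp}(S_j)-F_{ijp}S_j\big).
\]
Because $f^*_{ijp}$ and $g^*_{ij}$ satisfy (H6) with the same constants and are non-decreasing, each difference in parentheses is $\le0$ and vanishes only when its argument is $0$, while the coefficients $\hat{c}_{ij},\hat{b}_{ijp}\ge0$; hence $y_i\le0$ for all $i$. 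Now suppose, for contradiction, that $\ov{S}\neq\ov{0}$. Since $\hat{\mathcal{M}}$ is an $M$-matrix, Theorem~\ref{teorema-B+P-1} applied to $\ov{z}=\ov{S}$ yields an index $i$ with $S_i\neq0$, thus $S_i>0$, and $S_iy_i\ge0$; as $S_i>0$ this gives $y_i\ge0$, and together with $y_i\le0$ we get $y_i=0$, so every summand above is $0$. On the other hand, \eqref{equ-aplicar-m-matriz} for this $i$, with $S_i>0$ and $\hat{a}_i<1$, has a strictly positive right-hand side, so some term $\hat{c}_{ij}g^*_{ij}(S_j)$ or $\hat{b}_{ijp}f^*_{ijp}(S_j)$ is positive. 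In the first case $\hat{c}_{ij}>0$ and $g^*_{ij}(S_j)>0$, whence $S_j>0$; but the vanishing of the corresponding summand of $y_i$ forces $g^*_{ij}(S_j)=G_{ij}S_j$, contradicting the strict inequality $g^*_{ij}(S_j)<G_{ij}S_j$ valid for $S_j>0$ under (H6). The second case is identical with $f^*_{ijp}$ and $F_{ijp}$. Therefore $\ov{S}=\ov{0}$, which proves item~{\it i.}

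Item~{\it ii.} follows the same scheme, replacing $\hat{\mathcal{M}},\hat{a}_i,\hat{b}_{ijp},\hat{c}_{ij}$ and (H6) by $\mathcal{M}^+,a_i^+,b^+_{ijp},c^+_{ij}$ and (H6*), and \eqref{equ-aplicar-m-matriz} by \eqref{equ-aplicar-m-matriz-irredutivel}. The singular irreducibility of $\mathcal{M}^+$ is precisely what permits invoking Lemma~\ref{lem-sistema-equacoes}{\it ii.} to produce $\ov{S}$ in the unbounded setting; for the final step one only needs that a singular irreducible $M$-matrix is in particular an $M$-matrix, so Theorem~\ref{teorema-B+P-1} applies to $\mathcal{M}^+$ verbatim, and the strict sublinearity $f^*_{ijp}(u)<F_{ijp}u$, $g^*_{ij}(u)<G_{ij}u$ for $u>0$ guaranteed by (H6*) again forces $\ov{S}=\ov{0}$.

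The main obstacle is the passage from the bound $\limsup_{m\to\infty}|x_i(m)|\le S_i$ to $S_i=0$. The decisive points are the sign bookkeeping giving $\ov{y}\le\ov{0}$ and the use of Theorem~\ref{teorema-B+P-1} to isolate a single coordinate at which $y_i$ is forced to be simultaneously $\ge0$ and $\le0$; turning the ensuing equality $g^*_{ij}(S_j)=G_{ij}S_j$ (or its $f$-analogue) into a contradiction then hinges on the strict inequalities of (H6)/(H6*) transferring to $f^*_{ijp}$ and $g^*_{ij}$, a fact already recorded in the proof of Lemma~\ref{lem-sistema-equacoes}, together with the elementary observation that $g^*_{ij}(S_j)>0$ implies $S_j>0$.
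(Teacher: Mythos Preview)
Your proof is correct. Both you and the paper start from Lemma~\ref{lem-sistema-equacoes} and reduce to showing $\ov{S}=\ov{0}$, and both rely on Theorem~\ref{teorema-B+P-1}, but the arguments to reach the contradiction differ. The paper proceeds by case analysis on the support of $\ov{S}$: it first treats $\ov{S}>0$, obtains $\hat{\mathcal{M}}\ov{S}^T<0$ outright, and then, when some components vanish, peels them off one at a time, applying the same argument to successive principal subsystems (implicitly using that every principal submatrix of an $M$-matrix is again an $M$-matrix). Your route is a single application of Theorem~\ref{teorema-B+P-1}: you first record the rewriting $y_i=\sum_j\hat{c}_{ij}\big(g^*_{ij}(S_j)-G_{ij}S_j\big)+\sum_{j,p}\hat{b}_{ijp}\big(f^*_{ijp}(S_j)-F_{ijp}S_j\big)\le0$, then from $S_i>0$ and $S_iy_i\ge0$ force $y_i=0$, and finally exploit the positivity of the right-hand side of \eqref{equ-aplicar-m-matriz} at that index to locate a summand that must be simultaneously zero and strictly negative. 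This avoids the dimension-reduction loop and the unspoken appeal to principal submatrices; the paper's version, on the other hand, makes the case $\ov{S}>0$ entirely transparent and keeps the logic closer to the classical ``$\hat{\mathcal{M}}\ov{S}^T<0$ is impossible'' pattern. For item~{\it ii.} both proofs replace hats by $+$'s and note that irreducibility is only needed to invoke Lemma~\ref{lem-sistema-equacoes}{\it ii.}, while the concluding step uses just the $M$-matrix property.
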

	\begin{proof}
		Here we present only the proof of item {\it i.}. The proof of item {\it ii.} follows similarly putting the matrix $\mathcal{M}^+$ instead of $\hat{\mathcal{M}}$. Recall the definitions of the matrices $\mathcal{M}^+$ and $\hat{\mathcal{M}}$ in \eqref{def-matriz-M+} and \eqref{def-matriz-capeu}, respectively.
		
		For $\sigma\in\en_0$ and $\ov\psi\in\mathcal{B}^n$, consider the solution $\ov{x}(m)=\ov{x}(m,\sigma,\ov\psi)=(x_1(m),\ldots,x_n(m))$ of  \eqref{model-principal}-\eqref{model-principal-IC}.
		
		From item {\it i.} of Lemma \ref{lem-sistema-equacoes}, there is $\ov{S}=(S_1,\ldots,S_n)\geq0$ such that $\dst\limsup_{m\to\infty}|x_i(m)|\leq S_i$ and \eqref{equ-aplicar-m-matriz} holds.
		
		To conclude the proof it is enough to prove that $S_i=0$ for all $i\in[1,n]_\ze$.
		
		First, we assume that $\ov{S}>0$. As $f^*_{ijp}$ and $g^*_{ij}$ also satisfy  (H6), then $f^*_{ijp}(S_j)<F_{ijp}S_j$ and $g_{ij}^*(S_j)<G_{ij}S_j$ for all $i,j\in[1,n]_\ze$ and $p\in[1,P]_\ze$. Consequently, from \eqref{equ-aplicar-m-matriz}, we obtain
		$$
		S_i=\frac{1}{1-\hat{a}_i}\sum_{j=1}^n\left(\hat{c}_{ij}g_{ij}^*(S_j)+\sum_{p=1}^P\hat{b}_{ijp}f_{ijp}^*(S_j)\right)<\frac{1}{1-\hat{a}_i}\sum_{j=1}^n\left(\hat{c}_{ij}G_{ij}+\sum_{p=1}^P\hat{b}_{ijp}F_{ijp}\right)S_j,
		$$ 
		for all $i\in[1,n]_\ze$, thus $\hat{\mathcal{M}}\ov{S}^T<0$ with $\ov{S}=(S_1,\ldots,S_n)>0$. Considering $\ov{z}=(z_1,\ldots,z_n)=(S_1,\ldots,S_n)>0$ and $\ov{y}=(y_1,\ldots,y_n)=\left(\hat{\mathcal{M}}\ov{S}^T\right)^T$, we have
		$$
		z_iy_i=S_i\left(S_i(1-\hat{a}_i)-\sum_{j=1}^n\left(\hat{c}_{ij}G_{ij}+\sum_{p=1}^P\hat{b}_{ijp}F_{ijp}\right)S_j\right)<0,\quad i\in[1,n]_\ze,
		$$
		which is not possible because of $\hat{\mathcal{M}}$ being an $M-$matrix (see Theorem \ref{teorema-B+P-1}).
		
		Now, suppose that there is $i\in[1,n]_\ze$ such that $S_i=0$ and $S_j>0$ for all $j\neq i$. We do not lose generality if we assume that $j=n$. Thus we have $S_n=0$ and $S_j>0$ for $j\in[1,n-1]_\ze$. Consequently, from \eqref{equ-aplicar-m-matriz} we obtain
		$$
		S_i=\frac{1}{1-\hat{a}_i}\sum_{j=1}^{n-1}\left(\hat{c}_{ij}g_{ij}^*(S_j)+\sum_{p=1}^P\hat{b}_{ijp}f_{ijp}^*(S_j)\right),\quad i\in[1,n-1]_\ze,
		$$ 
		and by the same argument as before we get another contradiction thus there is $j\in[1,n-1]_\ze$ such that $S_j=0$. 
		
		After $n-1$ steps we conclude that $S_2=\cdots=S_{n-1}=S_n=0$. From \eqref{equ-aplicar-m-matriz} we have
		$$
		S_1=\frac{1}{1-\hat{a}_1}\left(\hat{c}_{11}g^*_{11}(S_1)+\sum_{p=1}^P\hat{b}_{11p}f^*_{11p}(S_1)\right).
		$$
		
		
		If $S_1>0$, then 
		$$
		S_1=\frac{1}{1-\hat{a}_1}\left(\hat{c}_{11}g^*_{11}(S_1)+\sum_{p=1}^P\hat{b}_{11p}f^*_{11p}(S_1)\right)<\frac{1}{1-\hat{a}_1}\left(\hat{c}_{11}G_{11}S_1+\sum_{p=1}^P\hat{b}_{11p}F_{11p}S_1\right),
		$$
		thus
		$$
		S_1\left(1-\hat{a}_1-\hat{c}_{11}G_{11}-\sum_{p=1}^P\hat{b}_{11p}F_{11p}\right)<0,
		$$
		which is not possible because of Theorem \ref{teorema-B+P-1} again. Consequently $S_1=0$ and the proof is concluded.
	\end{proof}
	
	Let us now take the following autonomous discrete-time Hopfield model with infinite delays
	\begin{eqnarray}\label{model-autonomo}
		x_i(m+1) &=&a_ix_i(m-\nu_i)+ \displaystyle\sum_{j=1}^{n}\sum_{p=1}^P b_{ijp}f_{ijp}\left(x_j(m-\tau_{ijp})\right) \nonumber\\ 
		& & + \displaystyle\sum_{j=1}^{n} c_{ij}   \sum_{l=0}^{\infty} \zeta_{ijl} g_{ij} (x_j(m-l)) + I_i,\,\,\quad i\in[1,n]_\ze,\,m\in\en_0,
	\end{eqnarray}
	where $n,P\in\en$ and, for each $i,j\in[0,n]_\ze$ and $p\in[0,P]_\ze$, $a_i\in(-1,1)$, $\nu_i,\tau_{ijp}\in\en_0$, $\zeta_{ijl},b_{ijp},c_{ij},I_i\in\er$, and $f_{ijp},g_{ij}:\er\to\er$ are functions. 
	
	As an immediate  consequence of Theorem \ref{teo-principal}, we derive the following global stability criteria for the autonomous discrete-time Hopfield neural network model \eqref{model-autonomo}.

	\begin{corolario}\label{criterio-melho-hong+ma}
		Assume that $I_i=0$ for all $i\in[0,n]_\ze$ and that hypothesis (H4) holds.
		
		Defining $\mathcal{D}:=diag(1-|a_1|,\ldots,1-|a_n|)$, consider the matrix 
		\begin{eqnarray}\label{matrix-situacao-autonoma}
			\mathcal{M}:=\mathcal{D}-\left[|c_{ij}|G_{ij}+\sum_{p=1}^P|b_{ijp}|F_{ijp}\right]_{i,j=1}^n.
		\end{eqnarray}
		\begin{enumerate}
			\item If (H6) holds and $\mathcal{M}$ is an $M-$matrix, then the zero solution of \eqref{model-autonomo}  is globally attractive, that is for all $\sigma\in\en_0$ and $\ov{\psi}\in\mathcal{B}^n$, the solution $\ov{x}(\cdot,\sigma,\ov{\psi})$ of \eqref{model-autonomo} satisfies 
			$$
			\lim_{m\to\infty}|\ov{x}(m,\sigma,\ov\psi)|=0;
			$$
			\item If (H6*) holds and $\mathcal{M}$ is a singular irreducible $M-$matrix, then the zero solution of \eqref{model-autonomo}  is globally attractive, that is for all $\sigma\in\en_0$ and $\ov{\psi}\in\mathcal{B}^n$, the solution $\ov{x}(\cdot,\sigma,\ov{\psi})$ of \eqref{model-autonomo} satisfies
			$$
			\lim_{m\to\infty}|\ov{x}(m,\sigma,\ov\psi)|=0.
			$$
		\end{enumerate}
		
	\end{corolario}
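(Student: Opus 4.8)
The plan is to recognize that the autonomous model \eqref{model-autonomo} is exactly the special case of the general model \eqref{model-principal} in which all coefficient functions are constant in $m$ and the external inputs vanish, and then to invoke Theorem \ref{teo-principal} directly. Concretely, \eqref{model-autonomo} arises from \eqref{model-principal} by setting $a_i(m)\equiv a_i$, $\nu_i(m)\equiv\nu_i$, $\tau_{ijp}(m)\equiv\tau_{ijp}$, $b_{ijp}(m)\equiv b_{ijp}$, $c_{ij}(m)\equiv c_{ij}$, and $I_i(m)\equiv I_i=0$. Thus the whole argument reduces to checking that hypotheses (H1)--(H5) hold in this setting and that the two matrices governing Theorem \ref{teo-principal} collapse onto the single matrix $\mathcal{M}$ of \eqref{matrix-situacao-autonoma}.

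First I would verify (H1)--(H5). Hypothesis (H1) holds because $a_i^+=\sup_m|a_i|=|a_i|<1$ by the standing assumption $a_i\in(-1,1)$; (H2) holds because constant delays give $m-\nu_i\to\infty$ and $m-\tau_{ijp}\to\infty$; (H3) is immediate since constant functions are bounded; (H4) is assumed in the statement; and (H5) holds trivially, as $I_i\equiv0$ forces $\sum_{m=0}^\infty|I_i(m)|=0<\infty$.

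Next I would identify the relevant matrices. From the definitions \eqref{def-sup-coef} and \eqref{def-limsup-coef}, constancy of the coefficients yields $a_i^+=\hat{a}_i=|a_i|$, $b_{ijp}^+=\hat{b}_{ijp}=|b_{ijp}|$, and $c_{ij}^+=\hat{c}_{ij}=|c_{ij}|$. Substituting these into \eqref{def-matriz-M+} and \eqref{def-matriz-capeu} gives $\mathcal{M}^+=\hat{\mathcal{M}}=\mathcal{M}$, with $\mathcal{M}$ as in \eqref{matrix-situacao-autonoma}. Hence the hypothesis ``$\hat{\mathcal{M}}$ is an $M-$matrix'' of item {\it i.} of Theorem \ref{teo-principal} becomes exactly ``$\mathcal{M}$ is an $M-$matrix'', while the hypothesis ``$\mathcal{M}^+$ is a singular irreducible $M-$matrix'' of item {\it ii.} becomes ``$\mathcal{M}$ is a singular irreducible $M-$matrix''.

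Finally I would conclude. Under (H6) with $\mathcal{M}$ an $M-$matrix, item {\it i.} of Theorem \ref{teo-principal} gives $\lim_{m\to\infty}|\ov{x}(m,\sigma,\ov\psi)|=0$ for every $(\sigma,\ov\psi)$; under (H6*) with $\mathcal{M}$ a singular irreducible $M-$matrix, item {\it ii.} yields the same conclusion. Since the Remark following (H6*) gives $f_{ijp}(0)=g_{ij}(0)=0$ and $I_i=0$, the constant function $\ov{x}\equiv\ov{0}$ is an equilibrium of \eqref{model-autonomo}, so convergence of every solution to $\ov{0}$ is precisely the asserted global attractivity of the zero solution. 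The only step requiring any care is the bookkeeping that matches the limsup-defined quantities $\hat{a}_i,\hat{b}_{ijp},\hat{c}_{ij}$ and the sup-defined quantities $a_i^+,b_{ijp}^+,c_{ij}^+$ to the constants $|a_i|,|b_{ijp}|,|c_{ij}|$; beyond this identification there is no genuine obstacle, which is why the result is an immediate corollary of Theorem \ref{teo-principal}.
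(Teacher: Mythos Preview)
Your proposal is correct and matches the paper's approach: the paper itself presents this corollary as ``an immediate consequence of Theorem \ref{teo-principal}'' with no separate proof, and your argument supplies exactly the routine verification (constant coefficients make (H1)--(H5) hold and force $\mathcal{M}^+=\hat{\mathcal{M}}=\mathcal{M}$) that justifies this claim.
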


	By taking $\nu_i=0$, $P=1$, $c_{ij}=0$, $I_i=0$, and $f_{ij}=f_{j}$ in \eqref{model-autonomo} for all $i,j\in[1,n]_\ze$, we obtain the next discrete-time Hopfield model with finite delays:
	\begin{eqnarray}\label{model-hong+ma}
		x_i(m+1) =a_ix_i(m)+ \displaystyle\sum_{j=1}^{n} b_{ij}f_{j}\left(x_j(m-\tau_{ij})\right),\,\,\quad i\in[1,n]_\ze,\,m\in\en_0,
	\end{eqnarray}
	and the matrix $\mathcal{M}$ defined in \eqref{matrix-situacao-autonoma} takes the form
	$$
	\mathcal{L}:=\mathcal{D}-\big[|b_{ij}|F_{j}\big]_{i,j=1}^n,
	$$
	where $\mathcal{D}=diag(1-|a_1|,\ldots,1-|a_n|)$.
	We note that model \eqref{model-hong+ma} has finite delays. Consequently, it is possible to transform it into a non-delay discrete-time model, making it easier to handle than \eqref{model-autonomo}.
	\begin{rem}
		\emph{In \cite[Theorem 3.1]{hong+ma}, the global attractivity of \eqref{model-hong+ma} is established under the assumption that $\mathcal{L}$ is an $M-$matrix and the activation functions, $f_j:\er\to\er$, are differentiable, satisfying the conditions
			$$
			f_j(0)=0,\quad \lim_{x\to\infty}f_j(x)=1,\quad\lim_{x\to-\infty}f_j(x)=-1,
			$$
			and 
			\begin{eqnarray}\label{hip-hong-ma}
				f'_j(0)=1>f'_j(x)>0,\quad x\in\er\setminus\{0\}.
			\end{eqnarray} 
			These conditions are more restrictive than hypothesis (H6). Additionally  model \eqref{model-autonomo} includes both infinite delays and delays in the leakage terms. Together, theses factors demonstrate that  item {\it i.} in Corollary  \ref{criterio-melho-hong+ma} provides a significant improvement over the global criterion presented in  \cite{hong+ma}. We note that the authors in \cite{hong+ma} wrote the hypothesis $f'_j(0)=\sup_{x\in\er}f_j'(x)=1$ instead of \eqref{hip-hong-ma}. However, this is inaccurate because \eqref{hip-hong-ma} is need to obtain $\bar{f}_j(M_j)<M_j$ in \cite[page 4940, line 5]{hong+ma}.}
		
		\emph{Finally, we note that, to the best of our knowledge, stability criteria involving singular $M-$matrix along with unbounded activation functions have not been previously studied. Consequently, items {\it ii.} in Theorem \ref{teo-principal} and Corollary \ref{criterio-melho-hong+ma} give answers to these situations.}\\
	\end{rem}
	
	Now, taking $P=2$, $\tau_{ij1}=0$, $I_i=0$, and $f_{ij1}=f_{ij2}=g_{ij}=g_j$ in \eqref{model-autonomo} for all $i,j\in[1,n]_\ze$, we obtain the following model
	\begin{eqnarray}\label{model-autonomo-jj2022}
		x_i(m+1) &=&a_ix_i(m-\nu_i)+ \displaystyle\sum_{j=1}^{n} b_{ij1}g_{j}\left(x_j(m)\right)+\sum_{j=1}^{n} b_{ij2}g_{j}\left(x_j(m-\tau_{ij2})\right) \nonumber\\ 
		& & + \displaystyle\sum_{j=1}^{n} c_{ij}   \sum_{l=0}^{\infty} \zeta_{ijl} g_{j} (x_j(m-l)),\,\,\quad i\in[1,n]_\ze,\,m\in\en_0,
	\end{eqnarray}
	and the matrix $\mathcal{M}$ defined in \eqref{matrix-situacao-autonoma} has the form
	$$
	\mathcal{E}:=\mathcal{D}-\left[\big(|b_{ij1}|+|b_{ij2}|+|c_{ij}|\big)G_j\right]_{i,j=1}^n,
	$$
	where $\mathcal{D}=diag(1-|a_1|,\ldots,1-|a_n|)$.
	Assume that $a_i\in(-1,1)$ for all $i\in[0,n]_\ze$, and that hypotheses (H4) and (H6*) hold. From Corollary \ref{criterio-melho-hong+ma}, we conclude that if $\mathcal{E}$ is a singular irreducible $M-$matrix, then the zero solution of \eqref{model-autonomo-jj2022} is globally attractive within the set of solutions with bounded initial conditions.\\
	
	\begin{rem}
		\emph{It is worth noting that model \eqref{model-autonomo-jj2022} is also a particular case of a discrete-time Hopfield model studied in \cite[model (9)]{jj-jdea}. Under the additional condition $\dst\sum_{l=0}^\infty\Ne^{\xi l}|\zeta_{ijl}|<\infty$,  \cite[Theorem 3.2]{jj-jdea} allows us to conclude the global exponential stability of the zero solution of \eqref{model-autonomo-jj2022}, provided that $\mathcal{E}$ is a non-singular $M-$matrix.}\\
	\end{rem}
	
	Now, we present a global attractivity  criterion for the case where the activation functions in \eqref{model-autonomo} are Lipschitz functions. 
	
	\begin{corolario}
		Assume that \eqref{model-autonomo} has an equilibrium point $\ov{x}^* \in \er^n$, and that hypotheses (H4) and
		\begin{description}
			\item[(H6$^\dag$)] For each $i,j \in [1,n]_\ze$ and $p \in [1,P]_\ze$, there exist constants $F_{ijp}, G_{ij} > 0$ such that
			$$
			|f_{ijp}(u) - f_{ijp}(v)| < F_{ijp} |u - v| \quad \text{and} \quad
			|g_{ij}(u) - g_{ij}(v)| < G_{ij} |u - v|,
			$$
			for $u, v \in \er$ with $u \neq v$,
		\end{description}
		hold.
		
		If $\mathcal{M}$, defined in \eqref{matrix-situacao-autonoma}, is a singular irreducible $M$-matrix, then for any $\sigma \in \en_0$ and $\ov{\psi} \in \mathcal{B}^n$, the solution $\ov{x}(\cdot, \sigma, \ov{\psi})$ of \eqref{model-autonomo} satisfies
		$$
		\lim_{m \to \infty} |\ov{x}(m, \sigma, \ov{\psi}) - \ov{x}^*| = 0.
		$$
	\end{corolario}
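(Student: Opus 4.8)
The plan is to reduce the statement to item \textit{ii.} of Corollary \ref{criterio-melho-hong+ma} by translating the equilibrium $\ov{x}^*$ to the origin. Since $\ov{x}^*=(x_1^*,\ldots,x_n^*)$ is an equilibrium of \eqref{model-autonomo}, the constant sequence $x_j(m)=x_j^*$ solves the system, and (using (H4) to guarantee absolute convergence of the series $\sum_l\zeta_{ijl}g_{ij}(x_j^*)$) the equilibrium identities
$$
x_i^*=a_ix_i^*+\sum_{j=1}^n\sum_{p=1}^P b_{ijp}f_{ijp}(x_j^*)+\sum_{j=1}^n c_{ij}\sum_{l=0}^\infty\zeta_{ijl}g_{ij}(x_j^*)+I_i,\quad i\in[1,n]_\ze,
$$
hold. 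First I would introduce the shifted variables $y_i(m)=x_i(m)-x_i^*$ and subtract these identities from \eqref{model-autonomo}. Because the model is autonomous and the inputs $I_i$ are constant, the terms $a_ix_i^*$, the values $f_{ijp}(x_j^*)$, $g_{ij}(x_j^*)$, and $I_i$ all cancel, leaving
$$
y_i(m+1)=a_iy_i(m-\nu_i)+\sum_{j=1}^n\sum_{p=1}^P b_{ijp}\widetilde{f}_{ijp}(y_j(m-\tau_{ijp}))+\sum_{j=1}^n c_{ij}\sum_{l=0}^\infty\zeta_{ijl}\widetilde{g}_{ij}(y_j(m-l)),
$$
where $\widetilde{f}_{ijp}(u):=f_{ijp}(u+x_j^*)-f_{ijp}(x_j^*)$ and $\widetilde{g}_{ij}(u):=g_{ij}(u+x_j^*)-g_{ij}(x_j^*)$.

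Next I would verify that the transformed system is again an instance of \eqref{model-autonomo} satisfying the hypotheses of Corollary \ref{criterio-melho-hong+ma}. The coefficients $a_i$, $b_{ijp}$, $c_{ij}$, $\zeta_{ijl}$ and the delays $\nu_i,\tau_{ijp}$ are unchanged, the input term is now identically zero, and (H4) still holds. The shifted activation functions satisfy $\widetilde{f}_{ijp}(0)=\widetilde{g}_{ij}(0)=0$ and, directly from (H6$^\dag$), for $u\neq0$,
$$
|\widetilde{f}_{ijp}(u)|=|f_{ijp}(u+x_j^*)-f_{ijp}(x_j^*)|<F_{ijp}|u|\quad\text{and}\quad|\widetilde{g}_{ij}(u)|=|g_{ij}(u+x_j^*)-g_{ij}(x_j^*)|<G_{ij}|u|,
$$
so $\widetilde{f}_{ijp},\widetilde{g}_{ij}$ satisfy (H6*) with the \emph{same} constants $F_{ijp},G_{ij}$. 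Consequently the matrix associated with the $\ov{y}$-system through \eqref{matrix-situacao-autonoma} coincides with $\mathcal{M}$, which by hypothesis is a singular irreducible $M$-matrix.

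Finally, I would note that any admissible initial condition $\ov{\psi}\in\mathcal{B}^n$ for $\ov{x}$ yields the bounded initial condition $\ov{\psi}-\ov{x}^*\in\mathcal{B}^n$ for $\ov{y}$, so item \textit{ii.} of Corollary \ref{criterio-melho-hong+ma} gives $\dst\lim_{m\to\infty}|\ov{y}(m,\sigma,\ov{\psi}-\ov{x}^*)|=0$, that is $\dst\lim_{m\to\infty}|\ov{x}(m,\sigma,\ov{\psi})-\ov{x}^*|=0$, as claimed. The only point requiring care is the verification that the shifted functions inherit (H6*) with identical constants and that the matrix built from the $\ov{y}$-system is literally $\mathcal{M}$; once this is in place the conclusion is immediate. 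I expect no genuine obstacle beyond this bookkeeping, since (H6$^\dag$) is precisely the incremental (Lipschitz-type) analogue of (H6*) centered at $x_j^*$, and the translation preserves both the coefficient structure and the associated $M$-matrix.
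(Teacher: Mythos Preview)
Your proposal is correct and follows essentially the same approach as the paper: translate the equilibrium to the origin, observe that the shifted activation functions satisfy (H6*) with the same constants $F_{ijp},G_{ij}$, and apply item \textit{ii.} of Corollary \ref{criterio-melho-hong+ma} to the resulting system with zero input and unchanged matrix $\mathcal{M}$.
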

	\begin{proof}
		Assume that $\ov{x}^*=(x_1^*,\ldots,x_n^*)\in\er^n$ is an equilibrium point of \eqref{model-autonomo}. Consequently,
		\begin{eqnarray*}\label{model-autonomo-equilibrio}
			x^*_i =a_ix^*_i+ \displaystyle\sum_{j=1}^{n}\sum_{p=1}^P b_{ijp}f_{ijp}\left(x^*_j\right)  + \displaystyle\sum_{j=1}^{n} c_{ij}   \sum_{l=0}^{\infty} \zeta_{ijl} g_{ij} (x^*_j) + I_i,\,\,\quad i\in[1,n]_\ze.
		\end{eqnarray*}
		Let $\sigma\in\en_0$ and $\ov{\psi}\in\mathcal{B}^n$. Consider $\ov{x}(m)=\ov{x}(m,\sigma,\ov{\psi})$ the solution of \eqref{model-autonomo} with initial condition $\ov{x}_\sigma=\ov{\psi}$.
		
		The change of variables $\ov{z}(m)=(z_1(m),\ldots,z_n(m))=\ov{x}(m)-\ov{x}^*$ transforms \eqref{model-autonomo} into
		\begin{eqnarray}\label{model-principal-lipschitz}
			z_i(m+1) &=&a_iz_i(m-\nu_i)+ \displaystyle\sum_{j=1}^{n}\sum_{p=1}^P b_{ijp}\tilde{f}_{ijp}\left(z_j(m-\tau_{ijp})\right) \nonumber\\ 
			& & + \displaystyle\sum_{j=1}^{n} c_{ij}  \sum_{l=0}^{\infty} \zeta_{ijl} \tilde{g}_{ij} (z_j(m-l)),\,\,\quad i\in[1,n]_\ze,\,m\in\en_0,
		\end{eqnarray}
		where $\tilde{f}_{ijp},\tilde{g}_{ij}:\er\to\er$ are defined by
		$
		\tilde{f}_{ijp}(u)=f_{ijp}(u+x^*_j)-f_{ijp}(x^*_j)$ and $\tilde{g}_{ij}(u)=g_{ij}(u+x^*_j)-g_{ij}(x^*_j)$, for $u\in\er$, respectively.
		
		From (H6$^\dag$), we obtain $|\tilde{f}_{ijp}(u)|< F_{ijp}|u|$ and $|\tilde{g}_{ij}(u)|< G_{ij}|u|$ for all $u\in\er$, thus (H6*) holds. Applying item {\it ii.} in Corollary \ref{criterio-melho-hong+ma} to the system \eqref{model-principal-lipschitz} we conclude that
		$$
		\lim_{m\to\infty}|\ov{z}(m)|=0,
		$$
		which means that
		$$
		\lim_{m\to\infty}|\ov{x}(m,\sigma,\ov{\psi})-\ov{x}^*|=0.
		$$ 
	\end{proof}

	\section{Numerical Examples}\label{numerical-examples}
	
	Here, we provide two numerical examples to highlight the effectiveness of the new results given in this work.

	\exemplo  In model  \eqref{model-principal} we take  $n=3$, $P=2$, the following coefficients in the leakage terms
	\begin{align*}
		a_1(m)&= \frac{1}{6}\cos (\pi m) &a_2(m) &=\frac{1}{m+1}, &a_3(m)&=\frac{1}{2},
	\end{align*}
	the delays in the leakage terms
	\begin{align*}
		\nu_1(m)&=1, &nu_2(m)&=2, &nu_3(m)&=3,
	\end{align*}
	the coefficients in the terms with discrete time-varying delays
	\begin{align*}
		b_{111}(m)&=0,&b_{112}(m)&=0,&b_{121}(m)&= \frac{1}{6}\sin\left(\frac{\pi}{2} m\right),\\
		b_{131}(m)&=0,&b_{132}(m)&=0,&b_{122}(m)&=\frac{1}{6}\cos(\pi m),\\
		b_{211}(m)&=0,&
		b_{212}(m)&=0,&b_{221}(m)&=\frac{1}{3}\sin\left(\frac{\pi}{2} m\right),\\
		b_{222}(m)&=0,&	b_{231}(m)&=0,&b_{232}(m)&=\frac{1}{6}\cos(\pi m),\\
		b_{311}(m)&=0,&b_{312}(m)&=0,&b_{321}(m)&=0,\\
		b_{322}(m)&=0,&b_{332}(m)&=0,&b_{331}(m)&=\frac{1}{2(1+\Ne^{-m})}-\frac{1}{4},
	\end{align*}
	the coefficients in the terms with distributed delays
	\begin{align*}
		c_{12}(m)&=0,&c_{13}(m)&=0,&c_{11}(m)&=\frac{2}{3}\tanh(m)\\
		c_{21}(m)&=0,&c_{22}(m)&=0,&c_{23}(m)&=\frac{2}{3}\cos(\pi m),\\
		c_{31}(m)&=0,&c_{32}(m)&=0,&c_{33}(m)&=\cos(\pi m),\\
	\end{align*}
	and the discrete time-varying delays
	\begin{align*}
		\tau_{121}(m)&=\left\lfloor\frac{m}{4}\right\rfloor,&\tau_{122}(m)&=1,&\tau_{221}(m)=2,\\
		\tau_{232}(m)&=3,&\tau_{331}(m)&=3,
	\end{align*}
	recalling that $\lfloor x \rfloor$ denotes the integer part of the real number $x\in\er$. We also take
	the activation functions
	\begin{align*}
		f_{121}(u)&=f_{122}(u)=\min\left\{\arctan(|u|),1\right\},\\
		f_{221}(u)&=f_{232}(u)=\tanh(u),\ \ \ \
		f_{331}(u)=\frac{u}{\sqrt{1+u^2}},\\
		g_{11}(u)&=g_{23}(u)=g_{33}(u)=\frac{1}{1+\Ne^{-u}}-\frac{1}{2},
	\end{align*}
	the Kernel factors in the terms with infinite distributed delays
	\begin{align*}
		\zeta_{11l}&=\frac{1}{(l+1)(l+2)}, &\zeta_{23l}=\zeta_{33l}=\frac{1}{2^{l+1}},\\
	\end{align*} 
	for $l\in\en_0$, and the external inputs 
	\begin{align*}	
		I_1(m)&=\frac{1}{(m+1)^2},&I_2(m)&=\frac{1}{2^m},&I_3(m)&=\frac{1}{(m+1)^5}.
	\end{align*}
	
	In this example we have $F_{ijp}=1$, $G_{ij}=\frac{1}{4}$ for $i,j=1,2,3$ and $p=1,2$, and the matrix $\hat{\mathcal{M}}$ in \eqref{def-matriz-capeu} has the form
	$$
	\hat{\mathcal{M}}=\left[\begin{array}{ccc}
		\frac{5}{6} & 0 & 0\\
		\\
		0 & 1 & 0\\
		\\
		0 & 0 &\frac{1}{2}\\
	\end{array}\right]-\left[\begin{array}{ccc}
		\frac{1}{6} & \frac{1}{3} & 0\\
		\\
		0 & \frac{1}{3} & \frac{1}{3}\\
		\\
		0 & 0 & \frac{1}{2}\\
	\end{array}\right]=\left[\begin{array}{ccc}
		\frac{2}{3} & -\frac{1}{3} & 0\\
		\\
		0 & \frac{2}{3} & -\frac{1}{3}\\
		\\
		0 & 0 & 0\\
	\end{array}\right], 
	$$
	which is a singular M-matrix. As the hypothesis (H1)-(H6) hold, item {\it i.} of Theorem \ref{teo-principal} ensures that all solutions, $\ov{x}(m, \sigma, \ov{\psi})$ with $\sigma\in\en_0$ and $\ov\psi\in\mathcal{B}^3$, of this example converge to $(0,0,0)$ as $m$ tends to infinity. Figure \ref{fig:modelacaox2} shows the plot of the solution  $\ov{x}(m)=\ov{x}(m,0,\ov{\psi})=(x_1(m),x_2(m),x_3(m))$ of this illustrative numerical example with initial condition $\ov{x}_0(s)=
	\overline{\psi}(s)=\left\{\begin{array}{ll}
		\left(\sin(s),-\cos(s),\Ne^s\right),& s\in[-9,0]_\ze\\
		(0,0,0),&s\in(-\infty,-9)_\ze
	\end{array}\right.$.
	\begin{figure}[h]
		\centering
		{\includegraphics[width=15.5cm]{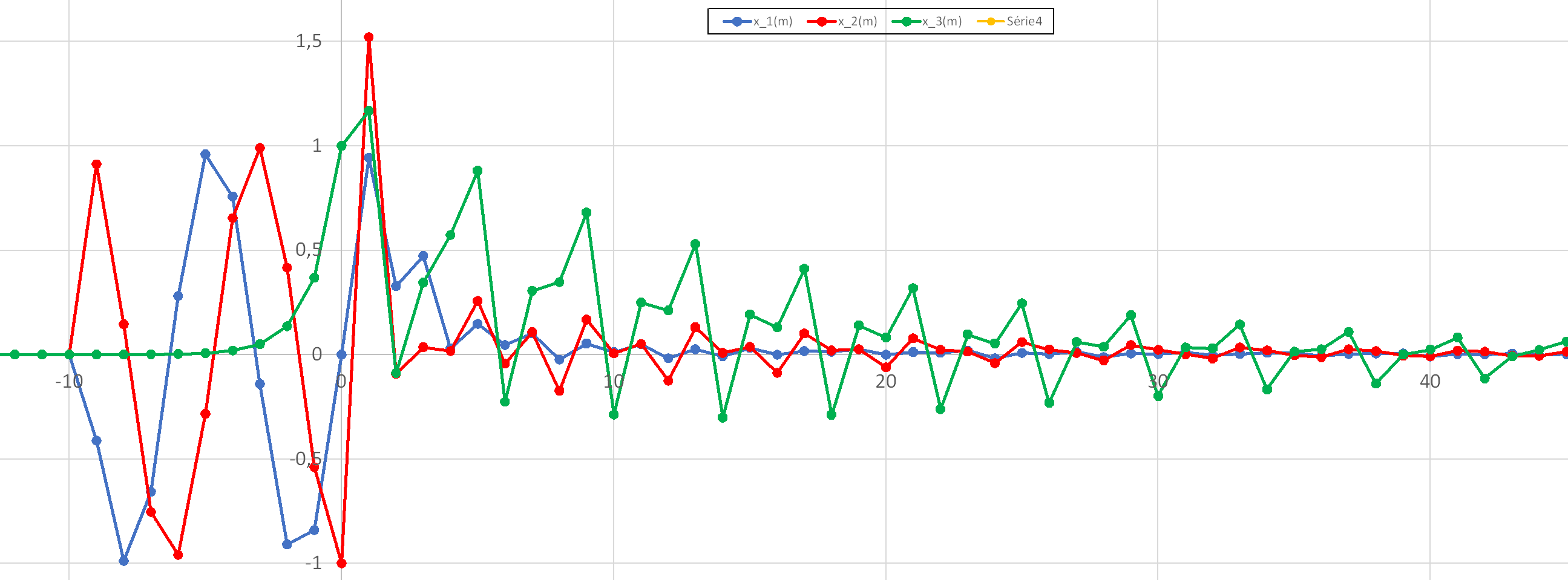}}
		\caption{Plot of a solution $\ov{x}(m)=(x_1(m),x_2(m),x_3(m))$ of the numerical Example 4.1..
		}\label{fig:modelacaox2}
	\end{figure}
	
	We note that this numerical example is non-autonomous and has unbounded delays. Therefore, the stability criterion established in \cite{hong+ma} cannot be applied in this case.

	\exemplo In this second numerical example consider, in model  \eqref{model-principal}, $n=3$, $P=2$, the coefficients in the leakage terms
	\begin{align*}
		a_1(m)&=\frac{2}{3}\cos (\pi m),&a_2(m)&=\frac{1}{3}\sin \left(\frac{\pi}{2} m\right),\\
		a_3(m)&=\frac{2}{3}\sin \left(\frac{\pi}{2} m\right),
	\end{align*}
	the delays in the leakage terms
	\begin{align*}
		\nu_1(m)&=1,&\nu_2(m)=&3,&nu_3(m)&=4,
	\end{align*}
	the coefficients in the terms with discrete time-varying delays
	\begin{align*}
		b_{111}(m)&=0,&b_{112}(m)&=0,&b_{121}(m)&= \frac{1}{9},\\
		b_{122}(m)& =\frac{1}{9}\cos(\pi m),&b_{131}(m)&=0,&b_{132}(m)&=0,\\
		b_{211}(m)&=\frac{1}{9}\sin \left(\frac{\pi}{2} m\right),&b_{212}(m)&=0,&b_{221}(m)&=0,\\
		b_{222}(m)&=0,&b_{231}(m)&=\frac{1}{4},&b_{232}(m)&=0,\\
		b_{311}(m)&=0,&b_{312}(m)&=0,&b_{322}(m)&=0,\\ 
		b_{321}(m)&=\frac{1}{12}\cos(\pi m),&b_{331}(m)&=0,&b_{332}(m)&=0,  
	\end{align*}
	the coefficients in the terms with distributed delays
	\begin{align*}
		c_{11}(m)&=0,&c_{12}(m)&=\frac{1}{9},&c_{13}(m)&=0,\\
		c_{21}(m)&=\frac{2}{9},&c_{22}(m)&=0,&c_{23}(m)&=\frac{1}{12},\\
		c_{31}(m)&=0,&c_{32}(m)&=\frac{1}{4}\sin\left(\frac{\pi}{2} m\right),&c_{33}(m)&=0,
	\end{align*}
	the discrete time-varying delays
	\begin{align*}
		\tau_{121}(m)&=0,&\tau_{122}(m)&=\left\lfloor\frac{m}{3}\right\rfloor,&\tau_{211}(m)&=1,\\
		\tau_{231}(m)&=5,&\tau_{321}(m&)=\left\lfloor\frac{m}{4}\right\rfloor,	
	\end{align*}
	the activation functions
	\begin{align*}
		f_{121}(u)&=\arctan(u),\\ 
		f_{122}(u)&=f_{211}(u)=\max\left\{\tanh(|u|),u-\frac{1}{10}\right\},\\
		f_{231}(u)&=f_{321}(u)=\tanh(u),\\
		g_{12}(u)&=g_{21}=g_{23}(u)=g_{32}(u)=\tanh(u)	
	\end{align*}
	the Kernel factors in the terms with infinite distributed delays
	\begin{align*}
		\zeta_{12l}=\zeta_{21l}=\zeta_{23l}=\zeta_{32l}=\frac{1}{2^{l+1}},
	\end{align*}
	for $l\in\en_0$, and the external inputs
	\begin{align*}	
		I_1(m)&=\frac{1}{(m+1)^2},&I_2(m)&=\frac{1}{2^m},&I_3(m)=\frac{1}{(m+1)^3}.
	\end{align*}
	
	We have $F_{ijp}=G_{ij}=1$ for $i,j=1,2,3$ and $p=1,2$, and the matrix $\mathcal{M}^+$ in \eqref{def-matriz-M+} has the form
	$$
	\mathcal{M}^+=\left[\begin{array}{ccc}
		\frac{1}{3} & 0 & 0\\
		\\
		0 & \frac{2}{3} & 0\\
		\\
		0 & 0 &\frac{1}{3}\\
	\end{array}\right]-\left[\begin{array}{ccc}
		0 & \frac{1}{3} & 0\\
		\\
		\frac{1}{3} & 0 & \frac{1}{3}\\
		\\
		0 & \frac{1}{3} & 0\\
	\end{array}\right]=\left[\begin{array}{ccc}
		\frac{1}{3} & -\frac{1}{3} & 0\\
		\\
		-\frac{1}{3} & \frac{2}{3} & -\frac{1}{3}\\
		\\
		0 & -\frac{1}{3} & \frac{1}{3}\\
	\end{array}\right],
	$$
	\begin{figure}[h]
		\centering
		{\includegraphics[width=15.5cm]{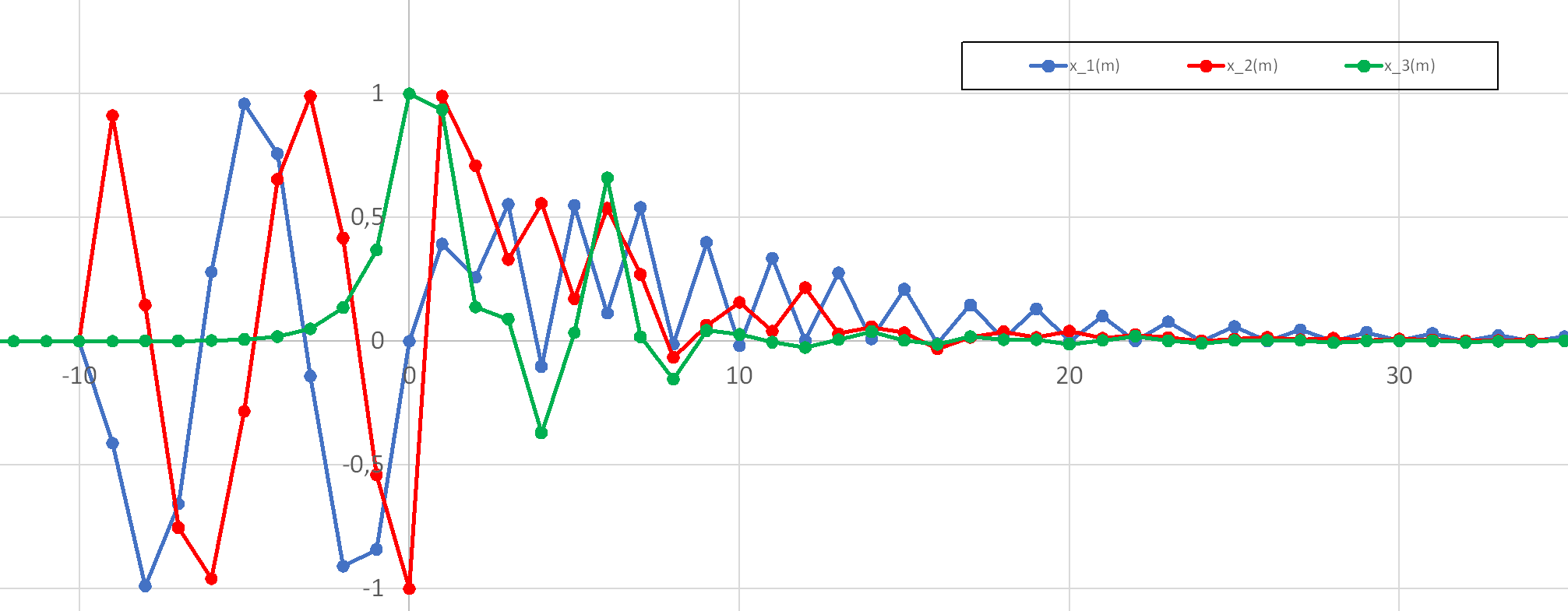}}
		\caption{Plot of a solution $\ov{x}(m)=(x_1(m),x_2(m),x_3(m))$ of the numerical Example 4.2..
		}\label{fig:modelacaox}
	\end{figure}
	which is a singular irreducible $M-$matrix. As the hypotheses (H1)-(H5) and (H6*) hold, item {\it ii.} of Theorem \ref{teo-principal} ensures that all solutions, $\ov{x}(m, \sigma, \ov{\psi})$ with $\sigma\in\en_0$ and $\ov\psi\in\mathcal{B}^3$, of this example converge to $(0,0,0)$ as $m$ tends to infinity. Figure \ref{fig:modelacaox} shows the plot of the solution $\ov{x}(m)=\ov{x}(m,0,\ov{\psi})=(x_1(m),x_2(m),x_3(m))$ of this illustrative numerical example with initial condition $\ov{x}_0(s)=
	\overline{\psi}(s)=\left\{\begin{array}{ll}
		\left(\sin(s),-\cos(s),\Ne^s\right),& s\in[-9,0]_\ze\\
		(0,0,0),&s\in(-\infty,-9)_\ze
	\end{array}\right.$. 
	
	We note that in this numerical example, there are unbounded delays and some activation functions are also unbounded. Therefore, the stability criterion established in \cite{hong+ma} also cannot be applied in this case.

	\section{Conclusions}
	
	In this work, we have established two global attractivity criteria for a discrete-time non-autonomous Hopfield neural network model with infinite distributed and time-varying delays (Theorem \ref{teo-principal}). Notably, unlike the common assumptions in the literature, the stability criteria presented here involve $M$-matrices that are not necessarily invertible.
	
	The first main stability criterion, outlined in item {\it i.} of Theorem \ref{teo-principal}, assumes that the activation functions of the model are bounded and sublinear. The second main stability criterion, presented in item {\it ii.} of Theorem \ref{teo-principal}, requires only that the activation functions be sublinear, with the additional assumption that the $M$-matrix is singular and irreducible. 
	
	The two numerical simulations provided illustrate the practical implications and insights offered by this work.
	
	

	\noindent
	{\bf Acknowledgments.}\\
	The research was partially financed by Portuguese Funds through FCT (Funda\c{c}\~ao para a Ci\^encia e a Tecnologia) within the Projects UIDB/00013/2020 and UIDP/00013/2020 of CMAT-UM.

\nocite{*}

\end{document}